\title{
    Fast, uniform, and compact scalar multiplication 
    \\
    for elliptic curves and genus~2 Jacobians 
    \\
    with applications to signature schemes
}
\newcommand{\ZZ}{\ensuremath{\mathbb{Z}}}
\newcommand{\FF}{\ensuremath{\mathbb{F}}}
\newcommand{\PP}{\ensuremath{\mathbb{P}}}
\newcommand{\EC}{\ensuremath{\mathcal{E}}}
\newcommand{\C}{\ensuremath{\mathcal{C}}}
\newcommand{\Jac}[1]{\ensuremath{\mathcal{J}_{#1}}}
\newcommand{\Kfast}[1]{\ensuremath{\mathcal{K}_{#1}^{\mathrm{fast}}}}
\newcommand{\Kgen}[1]{\ensuremath{\mathcal{K}_{#1}^{\mathrm{gen}}}}
\newcommand{\subgrp}[1]{\ensuremath{\left\langle{#1}\right\rangle}}
\newcommand{\G}{\ensuremath{\mathcal{G}}}
\newcommand{\Gquotient}{\ensuremath{\mathcal{G}/\subgrp{\pm1}}}
\newcommand{\ADD}{\ensuremath{\mathtt{ADD}}}
\newcommand{\xADD}{\ensuremath{\mathtt{xADD}}}
\newcommand{\xDBL}{\ensuremath{\mathtt{xDBL}}}
\newcommand{\xDBLADD}{\ensuremath{\mathtt{xDBLADD}}}
\newcommand{\Project}{\ensuremath{\mathtt{Project}}}
\newcommand{\Recover}{\ensuremath{\mathtt{Recover}}}
\newcommand{\Pattern}{{\ensuremath{\Project}-pseudomultiply-\ensuremath{\Recover}}}
\newcommand{\FqM}[1]{{{#1}\mathbf{M}}}
\newcommand{\FqE}[1]{{{#1}\mathbf{E}}}
\newcommand{\FqS}[1]{{{#1}\mathbf{S}}}
\newcommand{\FqI}[1]{{{#1}\mathbf{I}}}
\newcommand{\Fqa}[1]{{{#1}\mathbf{a}}}
\newcommand{\Fqc}[1]{{{#1}\mathbf{m}_c}}
\author{Ping Ngai Chung\inst{1} \and Craig Costello\inst{2} \and Benjamin Smith\inst{3}}
\institute{ 
    University of Chicago, USA\\
    \email{briancpn@math.uchicago.edu}\\
    \and
    Microsoft Research, USA \\
    \email{craigco@microsoft.com} \\
    \and
    INRIA 
    \emph{and}
    Laboratoire d'Informatique de l'\'{E}cole polytechnique (LIX), 
    France \\
    \email{smith@lix.polytechnique.fr}  
}
\begin{document}
\maketitle

\begin{abstract}
    We give a general framework for uniform, constant-time 
    one- and two-dimensional scalar multiplication algorithms for 
    elliptic curves and Jacobians of genus~2 curves that operate by
    projecting to the \(x\)-line or Kummer surface, where we can 
    exploit faster and more uniform pseudomultiplication, before 
    recovering the proper ``signed'' output back on the curve or Jacobian.  
    This extends the work of L\'opez and Dahab, Okeya and Sakurai, 
    and Brier and Joye to genus~2, and also to two-dimensional scalar
    multiplication.
    Our results show that many existing fast pseudomultiplication
    implementations (hitherto limited to applications in 
    Diffie--Hellman key exchange) can be wrapped with simple and 
    efficient pre- and post-computations to yield competitive full 
    scalar multiplication algorithms, ready for use in more general
    discrete logarithm-based cryptosystems, including signature schemes.
    This is especially interesting for genus~2, where Kummer surfaces
    can outperform comparable elliptic curve systems.
    As an example, we construct an instance of the Schnorr signature scheme
    driven by Kummer surface arithmetic.
\end{abstract}

\section{
    Introduction
}

In terms of per-bit security, 
elliptic curves and Jacobians of genus~2 curves appear to be roughly equivalent.
However, when it comes to efficient and side-channel-aware implementations, 
we see a curious divergence.
Full genus~2 Jacobian arithmetic is relatively slow compared with
elliptic curves,
and hard to implement in a uniform and constant-time fashion.
On the other hand, we have an extremely fast and uniform scalar
pseudomultiplication for genus~2 Kummer surfaces, 
which often outperforms its elliptic equivalent;
but this comes at the cost of identifying elements with their inverses,
so Kummer surfaces are widely believed to be suitable only for 
Diffie--Hellman protocols, where no individual full group operations are required.

Thus, genus~2 beats elliptic curve performance for Diffie--Hellman
(where we can use Kummer surfaces), but is beaten in almost every
other implementation scenario (where we are confined to Jacobians).
In this article we show how to exploit Kummer surface arithmetic to
carry out full Jacobian scalar multiplications, bringing the speed
and side-channel security of Kummers to implementations of other 
discrete-log-based cryptographic protocols.  
In particular, our results show that many existing competitive 
Diffie--Hellman implementations based on pseudomultiplication
can be wrapped with simple and efficient pre- and post-computations 
to yield competitive full scalar multiplication algorithms, 
ready for use in more general cryptosystems, 
including signature schemes.

\paragraph{Scalar multiplication.}
To make things precise,
let \(\G\) be a subgroup of an elliptic curve or a genus~2 Jacobian
(with \(\oplus\) denoting the group law, and \(\ominus\) its inverse).
We are interested in computing one-dimensional scalar multiplications
\[
    (m,P) 
    \longmapsto 
    [m]P 
    := 
    \underbrace{P\oplus\cdots\oplus P}_{m \text{ times }}
    \quad
    \text{for}
    \quad
    m \in \ZZ_{\ge 0}, P \in \G
    \ ,
\]
which is the operation at the heart of all discrete
logarithm and Diffie--Hellman problem-based cryptosystems.
We are also interested in two-dimensional multiscalar multiplications
\[
    ((m,n),(P,Q))
    \longmapsto
    [m]P\oplus[n]Q
    \quad
    \text{for}
    \quad
    m,n \in \ZZ_{\ge 0}, P,Q \in \G
    \ ;
\]
these appear explicitly in many cryptographic protocols, 
including signature verification, but they are also a key ingredient 
in endomorphism-accelerated one-dimensional scalar multiplication techniques
such as GLV~\cite{GLV} and its descendants.

If the scalar \(m\) is secret, then 
\([m]P\) must be computed in a \emph{uniform} and \emph{constant-time} way 
to protect against even the most elementary side-channel attacks.
This means that the execution path of the algorithm 
must be independent of the scalar \(m\)
(we may assume that the bitlength of \(m\) is fixed).

Uniform, constant-time algorithms for elliptic curve scalar
multiplication are well-known (and even widely-used).
In contrast, if \(\G\) is a subgroup of a genus~2 Jacobian, then 
this requirement has represented an insurmountable obstacle until now:
the usual Cantor arithmetic~\cite{cantor} and its derivatives~\cite{jac-on-jac}
have so many incompatible special cases 
that it has appeared impossible to implement it in a uniform way
without abandoning all hope of competitive efficiency
(see~\S\ref{sec:genus2}).

\paragraph{Kummer surfaces and \(x\)-lines.}
The situation changes dramatically when we pass to the quotient 
\(\Gquotient\), identifying elements with their inverses.
Let
\[
    x : \G \longrightarrow \Gquotient
\]
be the quotient map,
so \(x(P) = x(\ominus P)\) for all \(P\) in \(\G\).
In the elliptic case, 
\(x\) is projection onto the \(x\)-coordinate (whence our notation);
in genus~2, \(x\) is the map from the Jacobian to its Kummer surface.
We emphasize that \(\Gquotient\) is \emph{not} a group,
and at first glance this prevents us instantiating group-based protocols
in \(\Gquotient\).
Nevertheless, scalar multiplication in \(\G\)
induces a well-defined \emph{pseudomultiplication}
\[
    (m,x(P)) \longmapsto x([m]P)
\]
in \(\Gquotient\), because \([m](\pm P) = \pm([m]P)\).
This suffices for implementing protocols like Diffie--Hellman key
exchange which \emph{only} involve
scalar multiplication, and not individual group operations 
(which we lose in passing from \(\G\) to \(\Gquotient\)).

Pseudomultiplication algorithms are typically faster and simpler than
full scalar multiplication algorithms---the \(x\)-only Montgomery ladder
being a case in point for elliptic curves---and these algorithms have
therefore become the basis of the fastest and safest Diffie--Hellman
implementations (such as Bernstein's Curve25519
software~\cite{Curve25519} and its heirs). 
We also see excellent Diffie--Hellman implementations based on Kummer
surfaces~\cite{danjabe-and-nok}. 

But it is widely believed that pseudomultiplication cannot be used for
general cryptosystems, because \(\Gquotient\) is not a group: 
since the ``sign'' of the output of a pseudomultiplication is ambiguous,
it cannot be used as the input to individual group operations.
This belief has so far disqualified Kummers as candidates for implementing most
common signature schemes, as well as encryption schemes such as
ElGamal~\cite{ElGamal}.
As a result, we tend not to see highly competitive genus~2
implementations of signatures and public-key encryption schemes,
because we are hamstrung by relatively slow and non-uniform Jacobian arithmetic.

\paragraph{Recovering group elements after pseudomultiplication.}

The folklore that Kummer surfaces cannot be used in true group-based
cryptosystems may appear mathematically true---but it is algorithmically false.

Indeed, it has long been known that x-only arithmetic on elliptic curves
can be used for full scalar multiplication: 
L\'opez and Dahab~\cite{Lopez--Dahab} 
(followed by Okeya and Sakurai~\cite{Okeya--Sakurai}
and Brier and Joye~\cite{Brier--Joye})
showed that the auxiliary values computed by the
x-only Montgomery ladder can be used to recover the missing y-coordinate,
and hence to compute full scalar multiplications on 
elliptic curves.  

In this work we extend this technique from elliptic curves to genus~2,
and from one- to two-dimensional scalar multiplication.
In the abstract, our algorithms
follow the same common pattern:
\begin{enumerate}
    \item
        First, \(\Project\) the inputs (and possibly some auxiliary elements)
        from \(\G\) to \(\Gquotient\) using the \(x\) map;
    \item
        then pseudomultiply in \(\Gquotient\)
        (that is, compute \(x([m]P)\) or \(x([m]P\oplus[n]Q)\)
        using a differential addition chain);
    \item
        finally, efficiently \(\Recover\) the correct preimage \([m]P\) 
        (or \([m]P\oplus[n]Q\)) in \(\G\) 
        from the outputs of the pseudomultiplication.
\end{enumerate}
The one-dimensional version of this pattern in~\S\ref{sec:1D}
uses the well-known Montgomery ladder~\cite{petmon}
for its pseudomultiplication;
the two-dimensional version in~\S\ref{sec:2D}
is based on Bernstein's binary differential addition chain~\cite{DJB-chain}.

In \S\ref{sec:EC} we apply our algorithms to various models of elliptic
curves.  We recover Okeya--Sakurai and Brier--Joye multiplication, along
with new uniform, compact two-dimensional scalar multiplication
algorithms.

Moving to genus~2,
the Jacobian point recovery method we present in~\S\ref{sec:genus2}
solves the problem of uniform genus~2 arithmetic 
(at least for scalar multiplication):
rather than wrestling with the special cases of Cantor's algorithm,
we can descend to the faster, more uniform Kummer surface, pseudomultiply
there, and then recover the right Jacobian point afterwards.
Our methods work on the most general form of the Kummer, 
and are easy to adapt to more specialized models.

In \S\ref{sub:gaudry-mult} 
we specialize to the Gaudry model for Kummer surfaces, which have the
fastest known arithmetic.  The result is exceptionally fast one- and
two-dimensional scalar multiplication algorithms for the genus~2
Jacobians that admit these Kummer surfaces.
Finally, in \S\ref{sec:signatures} we give a concrete example 
of how our algorithms can be applied to create fast instances of
Schnorr signature schemes.

\begin{remark}
    \label{rem:Lubicz--Robert-1}
    Damien Robert has pointed out to us that 
    his recent preprint with David Lubicz~\cite{Lubicz--Robert}
    uses similar techniques to improve the efficiency of 
    their explicit arithmetic of 
    general abelian varieties in arbitrarily high dimension 
    based on theta functions.  
    See Remark~\ref{rem:Lubicz--Robert-2} below for further details.
\end{remark}

\subsubsection{Notation and conventions} 
Throughout, \(\FF_q\) denotes a finite field of characteristic \(> 3\).
As usual, we use ${\bf M}$, ${\bf S}$, ${\bf I}$ and ${\bf a}$ to
respectively denote the costs of one field multiplication, squaring,
inversion, and addition in \(\FF_q\). 
We also use ${\bf m}_c$ to denote the cost of a field multiplication by a fixed constant $c$, which is often significantly different to ${\bf M}$, i.e., when $c$ is a \emph{small} curve constant. 

In our algorithms, 
we use the notation \((x_1,\ldots,x_n) \gets (y_1,\ldots,y_n)\)
to denote \emph{parallel assignment}.
In sequential terms, 
\((x,y) \gets (z,w)\) is equivalent to \(x \gets z ; y \gets w\)
(and to \(y \gets w ; x \gets z\)),
while \((x,y) \gets (y,x)\), which swaps \(x\) and \(y\),
is equivalent to \(t \gets x ; x \gets y; y \gets t\),
where \(t\) is a temporary variable.

\section{
    Key subroutines
}
\label{sub:subroutines}

In this paper we work with various models of elliptic curves, 
Jacobians, and Kummer surfaces, but viewed from a high level the
algorithms are essentially the same; we can therefore make some
substantial simplifications by presenting them as template algorithms
acting on an abstract abelian group \(\G\),
and its quotient \(\G/\pm1\), by black-box subroutines
(whose implementation details we will provide later).
We therefore assume the existence of six algorithms 
acting on elements of \(\G\) and \(\Gquotient\):

\begin{enumerate}
    \item 
        \underline{\(\Project : \G \to \Gquotient\)}
        implements the mapping \(x : \G \to \Gquotient\);
        that is,
        \[
            \Project(P) = x(P)
            \quad
            \text{ for all }
            P \in \G
            \ .
        \]
        This is trivial in the elliptic context,
        where it amounts to dropping one of the coordinates,
        and only slightly less straightforward in genus~2.
    \item
        \underline{\(\xDBL : \Gquotient \to \Gquotient\)}
        implements pseudo-doubling in \(\Gquotient\):
        that is,
        \[
            \xDBL(x(P)) = x([2]P)
            \quad
            \text{ for all }
            P \in \G
            \ .
        \]
        Efficient formul\ae{} for \(\xDBL\) are known in each of our
        contexts.
    \item 
        \underline{\(\xADD : (\Gquotient)^3 \to \Gquotient\)} 
        implements the standard differential addition:
        \[
            \xADD(x(P),x(Q),x(P\ominus Q)) = x(P\oplus Q)
            \quad
            \text{ for all }
            P \not= Q \in \G
            \ .
        \]
        As with \(\xDBL\), efficient formul\ae{} for \(\xADD\) 
        are known in each of our contexts.
    \item
        \underline{\(\xDBLADD : (\Gquotient)^3\to(\Gquotient)^2 \)} 
        implements a simultaneous doubling and differential addition:
        \[
            \xDBLADD(x(P),x(Q),x(Q\ominus P)) = (x([2]P),x(P\oplus Q))
            \quad
            \text{ for all }
            P \not= Q \in \G
            \ .
        \]
        While \(\xDBLADD\) may be na\"ively defined by computing
        an \(\xDBL\) and an \(\xADD\) separately---that is, as 
        \[
            \xDBLADD(x(P),x(Q),x(Q\ominus P)) 
            =
            (\xDBL(x(P)),\xADD(x(P),x(Q),x(Q\ominus P)))
        \]
        ---sometimes in practice it can be implemented more efficiently 
        by exploiting shared intermediate operands,
        so we treat it as a distinct operation.
    \item
        \underline{\(\ADD:\G\times\G\to\G\)}
        computes the group law in \(\G\): 
        \[
            \ADD(P,Q) = P\oplus Q
            \quad
            \text{ for all }
            P, Q \in \G 
            \ .
        \]
        \(\ADD\) is only used once in the two-dimensional algorithm; 
        it is not used at all in the one-dimensional algorithm.  
        Minimizing \(\ADD\)s is a deliberate strategy:
        in practice, it is often a relatively costly 
        and potentially non-constant-time operation 
        compared with \(\xADD\) (especially in genus~2).
    \item
        \underline{\(\Recover:\G\times (\Gquotient)^2\to\Gquotient\)}
        computes preimages under \(x\): 
        \[
            \Recover(P,x(Q),x(Q\oplus P)) = Q
            \quad
            \text{ for all }
            P, Q \in \G \setminus \G[2]
            \ .
        \]
        This operation was introduced for binary elliptic curves
        by L\'opez and Dahab~\cite{Lopez--Dahab},
        for Montgomery models of elliptic curves 
        by Okeya and Sakurai~\cite{Okeya--Sakurai},
        and for short Weierstrass models of elliptic curves
        by Brier and Joye~\cite{Brier--Joye}.
        We extend these techniques to genus~2 in~\S\ref{sec:genus2}
        and~\S\ref{sub:gaudry-mult}.
\end{enumerate}

\section{
    Uniform one-dimensional scalar multiplication
}
\label{sec:1D}

Algorithm~\ref{algorithm:Ladder-template}
is a template for uniform one-dimensional scalar multiplication;
it is suitable for use anywhere in curve-based cryptosystems
where the calculation \((m,P) \mapsto [m]P\) is required,
but especially those where \(m\) is secret.
Algorithm~\ref{algorithm:Ladder-template}
applies the \Pattern{} pattern
to lift the \(x\)-only Montgomery ladder for
pseudomultiplication in \(\Gquotient\)
to a full scalar multiplication routine for \(\G\),
generalizing and abstracting the methods 
of~\cite{Lopez--Dahab}, \cite{Okeya--Sakurai}, and~\cite{Brier--Joye}.

\RestyleAlgo{boxed}\LinesNumbered
\begin{algorithm}
    \caption{A one-dimensional uniform scalar multiplication template,
        based on the Montgomery ladder}
    \label{algorithm:Ladder-template}
    \SetKwFunction{Uniform1}{Uniform1}
    \SetKwInOut{Input}{Input}
    \SetKwInOut{Output}{Output}

    \Input{A positive integer $m = \sum_{i=0}^{\beta-1}m_i2^i$,
        with \(m_{\beta-1} \not=0\), and an element $P$ of~$\G$
    }
    \Output{$R = [m]P$}

    $ x_P \gets \Project(P) $ \;
    $ (t_1,t_2) \gets (x_P, \xDBL(x_P)) $ \;
    \For{$i = \beta-2$ down to $0$}{
        \uIf{$m_i = 0$}{
            $ (t_1,t_2) \gets \xDBLADD(t_1,t_2,x_P) $ \;
        }
        \Else{
            $ (t_2,t_1) \gets \xDBLADD(t_2,t_1,x_P) $ \;
        }
    }
    $ R \gets \Recover(P,t_1,t_2) $ \;
    \Return{$R$} \;
\end{algorithm}

\begin{lemma}
    \label{lemma:Ladder-cost}
    Let \(m\) be a non-negative integer of bitlength \(\beta\),
    and \(P\) an element of~\(\G\).
    Algorithm~\ref{algorithm:Ladder-template} computes \([m]P\)
    using
    one call to \(\Project\),
    one call to \(\xDBL\),
    \(\beta-1\) calls to \(\xDBLADD\),
    and
    one call to \(\Recover\).
\end{lemma}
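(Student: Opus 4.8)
The plan is to split the statement into the operation count, which is immediate, and the correctness claim that the returned value equals \([m]P\), which rests on the usual Montgomery-ladder invariant together with the black-box specifications of the subroutines from \S\ref{sub:subroutines}.

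First I would dispose of the count by reading off Algorithm~\ref{algorithm:Ladder-template} line by line: Line~1 is the one \(\Project\); Line~2 is the one \(\xDBL\) (and \(\xDBL\) occurs nowhere else); the loop ranges over the \(\beta-1\) indices \(i = \beta-2, \dots, 0\), and each pass executes exactly one \(\xDBLADD\) (Line~5 when \(m_i = 0\), Line~7 when \(m_i = 1\)); and Line~8 is the one \(\Recover\). No \(\ADD\), and no bare \(\xADD\), occurs. That accounts for the whole algorithm.

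For correctness I would argue by downward induction on the loop variable. For \(\beta-2 \ge i \ge -1\), let \(k_i\) denote the integer with binary digits \(m_{\beta-1}, \dots, m_{i+1}\) (the top part of \(m\)), so that \(k_{\beta-2} = m_{\beta-1} = 1\), \(k_{-1} = m\), and \(k_{i-1} = 2k_i + m_i\). The invariant to prove is that, at the start of the iteration in which the loop variable equals \(i\) (and, taking \(i = -1\), after the loop finishes), the working pair satisfies \((t_1, t_2) = (x([k_i]P), x([k_i + 1]P))\). The base case is handled by Lines~1--2 via the specifications of \(\Project\) and \(\xDBL\). For the inductive step I would feed \((x([k_i]P), x([k_i+1]P))\) — or, in the \(m_i = 1\) branch, its reverse — into \(\xDBLADD\), observing that in both branches the differential input demanded by the \(\xDBLADD\) specification is just \(x(\pm P) = x_P\); reading off the output then gives \((x([2k_i + m_i]P), x([2k_i + m_i + 1]P)) = (x([k_{i-1}]P), x([k_{i-1}+1]P))\), which is the invariant for index \(i-1\). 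Applying the invariant with \(i = -1\) yields \((t_1, t_2) = (x([m]P), x([m]P \oplus P))\), so Line~8 returns \(\Recover(P, x([m]P), x([m]P \oplus P)) = [m]P\) by the \(\Recover\) specification.

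The main place an error could creep in is the \(m_i = 1\) branch: \(\xDBLADD\) is asymmetric in its first two arguments, and Line~7 additionally swaps the assignment targets to \((t_2, t_1)\), so I would check carefully that the two swaps compose to preserve the stated ordering of the invariant (they do). The other point worth a sentence is that the subroutine specifications carry genericity hypotheses — distinct quotient inputs for \(\xDBLADD\), and \(P, [m]P \notin \G[2]\) for \(\Recover\) — so I would either restrict the lemma to inputs where these hold (e.g. \(P\) of large prime order and \(m\) in the appropriate range, as in all intended applications) or note separately that they are met there. Finally I would dispatch the corner case \(\beta = 1\): the loop is empty, \((t_1, t_2) = (x_P, \xDBL(x_P)) = (x([1]P), x([1]P \oplus P))\), and Line~8 returns \(P\), consistent with \([m]P = [1]P\).
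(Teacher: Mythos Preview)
Your proposal is correct and follows essentially the same approach as the paper: a line-by-line operation count, the standard Montgomery-ladder invariant \((t_1,t_2) = (x([\lfloor m/2^i\rfloor]P),\,x([\lfloor m/2^i\rfloor+1]P))\), and a final application of the \(\Recover\) specification. Your version is simply more explicit (spelling out the induction, the branch analysis, the genericity hypotheses, and the \(\beta=1\) case) than the paper's terse appeal to ``the standard Montgomery ladder''.
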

\begin{proof}
    Line~1 of
    Algorithm~\ref{algorithm:Ladder-template} 
    calls \(\Project\)
    to map \(P\) into \(\Gquotient\).
    Lines 2-9 are just the standard Montgomery ladder~\cite{petmon}.
    At the end of each of the \(\beta\) iterations of the loop
    (each of which calls \(\xDBLADD\) once),
    we have
    \[
        (t_1,t_2) 
        = 
        (x([\lfloor m/2^i \rfloor]P),x([\lfloor m/2^i \rfloor + 1]P))
        \ ;
    \]
    so at the end of the loop,
    at Line~10,
    \((t_1,t_2) = (x([m]P),x([m]P\oplus P))\).
    The \(\Recover(P,t_1,t_2)\) in Line~10
    therefore yields \([m]P\).
    \qed
\end{proof}

In its abstract form, 
Algorithm~\ref{algorithm:Ladder-template} is uniform and constant-time 
with respect to fixed-length~\(m\).
In practice, 
the implementation of \(\xDBLADD\) must also be uniform and constant-time.
If uniform, constant-time behaviour is required 
with respect to \(P\), 
then the implementation of \(\Project\) must also be uniform and constant-time.

\section{
    Uniform two-dimensional scalar multiplication
}
\label{sec:2D}

Algorithm~\ref{algorithm:DJB-template}
is a template for uniform two-dimensional scalar multiplication.
It is intended for use in cryptographic routines that require
computing \([m]P\oplus[b]Q\), 
especially when at least one of \(m\) and \(n\) are secret,
but it is also useful for implementing endomorphism-accelerated
scalar multiplications, 
which compute \([m]P\) as \([m_0]P \oplus [m_1]\phi(P) \).
Algorithm~\ref{algorithm:DJB-template}
applies the \Pattern{} pattern
to a pseudomultiplication based on
Bernstein's binary differential addition chain~\cite{DJB-chain}.
Our algorithm is similar to its \(x\)-only counterpart,
which was used (without any proof of correctness) in~\cite{CHS}.

\subsection{Bernstein's binary differential addition chain}

Bernstein defined his binary differential addition chain 
in~\cite[\S4]{DJB-chain} as follows.
First, set
\[
    C_0(0,0) = C_1(0,0) 
    := 
    \left(
        (0,0), (1,0), (0,1), (1,-1)
    \right)
    \ ;
\]
then the chain \(C_D(A,B)\) is defined recursively by
\[
    C_D(A,B) := C_d(a,b) \; || \; (O,E,M)
\]
where \(||\) denotes concatenation,
\(O\), \(E\), and \(M\) are defined by
\begin{align}
    \label{eq:O-def}
    O & := (A + (A+1\bmod{2}), B + (B+1\bmod{2}))
    \ ,
    \\
    \label{eq:E-def}
    E & := (A + (A+0\bmod{2}), B + (B+0\bmod{2}))
    \ ,
    \\
    \label{eq:M-def}
    M & := (A + (A + D\bmod{2}), B + (B + D + 1\bmod{2}))
    \ ,
\end{align}
and \(a\), \(b\), and \(d\) are defined by
\[
    a := \lfloor{A/2}\rfloor 
    \ ,
    \quad
    b := \lfloor{B/2}\rfloor
    \ ,
    \quad
    d := \begin{cases}
        D     & \text{if } a\equiv A \text{ and } b\equiv B \pmod{2} \ ,
        \\
        0     & \text{if } a\equiv A \text{ and } b\not\equiv B \pmod{2} \ ,
        \\
        1     & \text{if } a\not\equiv A \text{ and } b\equiv B \pmod{2} \ ,
        \\
        1 - D & \text{if } a\not\equiv A \text{ and } b\not\equiv B \pmod{2} \ .
    \end{cases}
\]
Observe that \(O\) contains two odd integers,
\(E\) two even integers,
and \(M\) is ``mixed'', 
with one even and one odd integer.
The differences \(M - O\), \(M - E\), and \(O - E\)
depend only on \(D\) and the parities of \(A\) and \(B\),
as shown in Table~\ref{tab:DJB-chain-differences}.

\begin{table}
    \caption{
        The differences between \(M\), \(O\), and \(E\)
        as functions of \(D\) 
        and \(A,B\pmod{2}\).
    }
    \label{tab:DJB-chain-differences}
    \begin{center}
        \begin{tabular}{c|c||c|c|c}
            \(A \pmod{2}\) & \(B \pmod{2}\) 
                & \(O - E\) & \(M - O\) & \(M - E\) 
            \\
            \hline
            \(0\) & \(0\) & \((1,1)\)   & \((D-1,-D)\)  & \((D,1-D)\)
            \\
            \(0\) & \(1\) & \((1,-1)\)  & \((D-1,D)\)   & \((D,D-1)\)
            \\
            \(1\) & \(0\) & \((-1,1)\)  & \((1-D,-D)\)  & \((-D,1-D)\)
            \\
            \(1\) & \(1\) & \((-1,-1)\) & \((1-D,D)\)   & \((-D,D-1)\)
            \\
            \hline
        \end{tabular}
    \end{center}
\end{table}

By definition, 
the triple \((O,E,M)\) contains three of the four pairs 
\( (A,B) \), \( (A+1,B) \), \( (A,B+1) \), and \( (A+1,B+1) \).
The missing pair is
\( (A + (A + D + 1 \bmod{2}), B + (B + D \bmod{2})) \),
from which it follows immediately that
\[
    C_{(A\bmod{2})}(A,B)
    \text{ contains } (A,B)
    \ .
\]

\begin{lemma}
    \label{lemma:DJB-chain-driver}
    With the notation above:
    Suppose \((a,b) \not= (0,0)\),
    and write \(o,e,m\) for the last three terms of \(C_d(a,b)\),
    so \(C_D(A,B) = (\ldots,o,e,m,O,E,M)\).
    Then \(O\), \(E\), and \(M\) 
    can be expressed in terms of \(o\), \(e\), \(m\), \(D\), and the parities of
    \(A\), \(B\), \(a\), and \(b\) 
    using the relations in Table~\ref{tab:DJB-chain-relations}.
\end{lemma}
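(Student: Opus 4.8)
The plan is a direct, finite verification: once the parities of $A$, $B$, $a$, $b$ and the bit $D$ are fixed, every object in sight is an explicit integer vector, so the lemma reduces to writing out closed forms and checking a bounded list of identities. First I would record closed forms for $O$, $E$, $M$ purely in terms of $a = \lfloor A/2\rfloor$, $b = \lfloor B/2\rfloor$ and the parities of $A$, $B$, $D$. Unwinding \eqref{eq:O-def}--\eqref{eq:M-def} using the identities $x + (x+1\bmod 2) = 2\lfloor x/2\rfloor + 1$ and $x + (x\bmod 2) = 2\lceil x/2\rceil$, one finds $O = (2a+1,2b+1)$, while $E$ is obtained by rounding each of $A$, $B$ up to the nearest even integer, and $M$ is the mixed pair whose coordinate parities are governed by $D$. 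In particular $O$, $E$, $M$ are, respectively, a pair of odd integers, a pair of even integers, and a mixed pair, each lying among the four ``children'' $(2a + \varepsilon, 2b + \delta)$ of $(a,b)$.

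Next, since $(a,b) \neq (0,0)$, the chain $C_d(a,b)$ is itself produced by the recursion, so its last three terms $o$, $e$, $m$ are precisely the $(O,E,M)$-triple of $C_d(a,b)$. Applying the closed forms of the previous step one level down (with $(A,B,D)$ replaced by $(a,b,d)$), I would write $o$, $e$, $m$ explicitly in terms of $a$, $b$, their parities, and $d$; by the parity remarks following \eqref{eq:M-def}, $o$ is the pair of odd integers, $e$ the pair of even integers, and $m$ the mixed pair among the four pairs obtained by rounding $a$ and $b$ up or down, and their pairwise differences $o-e$, $m-o$, $m-e$ are the fixed vectors of Table~\ref{tab:DJB-chain-differences}.

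It then remains to case-split on the parities of $a$ and $b$ (four cases) and, within each, on $d$ --- which is itself a function of $D$ and the parities of $A$, $B$, $a$, $b$ via the displayed four-way definition. In each resulting sub-case I would match each of $O$, $E$, $M$ either to an $\xDBL$ of one of $o$, $e$, $m$ (whenever the required half-pair occurs among $o$, $e$, $m$ rather than being the fourth pair excluded from $C_d(a,b)$), or to an $\xADD$ of two of $o$, $e$, $m$ whose difference is one of the fixed vectors $\pm(1,0), \pm(0,1), \pm(1,1), \pm(1,-1)$ read off from Table~\ref{tab:DJB-chain-differences}, and hence available to the chain as a standing input; when two of the three can be obtained at once, this is recorded as an $\xDBLADD$. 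Collecting these relations over all sub-cases produces exactly Table~\ref{tab:DJB-chain-relations}, each of whose entries is confirmed by a one-line computation with integer vectors.

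The only real difficulty is bookkeeping: correctly propagating $d$ through its four-way definition as the parities of $A$, $B$, $a$, $b$ vary, and deciding in each sub-case whether the half-pair needed for an $\xDBL$ genuinely appears among $o$, $e$, $m$ or is instead the pair missing from $C_d(a,b)$ --- which is what distinguishes a doubling relation from a differential-addition relation. No base case intervenes: the hypothesis $(a,b) \neq (0,0)$ is precisely what guarantees that $o$, $e$, $m$ arise from the recursion and not from $C_0(0,0) = C_1(0,0)$.
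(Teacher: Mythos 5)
Your proposal is correct and follows exactly the route the paper intends, which it compresses into ``elementary but lengthy calculations'' from the definitions, parity considerations, and Table~\ref{tab:DJB-chain-differences} applied one level down to \(o\), \(e\), \(m\) with \(d\) read off from the first four columns; your closed forms (\(O=(2a+1,2b+1)=o+e\), \(E\) the round-up to even, \(M\) the mixed pair with parities set by \(D\)) and the finite parity case-split are precisely that verification, including the key point that the hypothesis \((a,b)\neq(0,0)\) guarantees \(o,e,m\) come from the recursion. The only quibble is cosmetic: the four candidate pairs are \(\{A,A+1\}\times\{B,B+1\}\) rather than \((2a+\varepsilon,2b+\delta)\) when \(A\) or \(B\) is odd, but your explicit formulas are the correct ones and the argument is unaffected.
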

\begin{proof}
    The result follows---after elementary but lengthy calculations---from
    the definition of \(C_D(A,B)\), considerations of parity,
    and the values in Table~\ref{tab:DJB-chain-differences}
    applied to \(o\), \(e\), and \(m\), with \(d\) derived from 
    the first four columns.
    \qed
\end{proof}

\begin{table}
    \caption{
        Relations between the adjacent triples \((o,e,m)\) and \((O,E,M)\).
    }
    \label{tab:DJB-chain-relations}
    \begin{center}
        \begin{tabular}{c|c|c|@{\; }c@{\; }||@{\; }cc|@{\; }c@{\; }|@{\; }cc}
            \(A - B\) & \(A - a\) & \(B - b\) 
            & \multirow{2}{*}{\(D\)} 
            & \multirow{2}{*}{\(O\)} & difference
            & \multirow{2}{*}{\(E\)} 
            & \multirow{2}{*}{\(M\)} & difference
            \\
            \(\pmod{2}\) & \(\pmod{2}\) & \(\pmod{2}\) & & & of summands
            & & & of summands
            \\
            \hline
            \hline
            0 & 0 & 0 & 0 & 
            \(o + e\) & \(\pm(1,1)\) & \(2e\) & \(m + e\) & \(\pm(0,1)\) 
            \\
            0 & 0 & 0 & 1 & 
            \(o + e\) & \(\pm(1,1)\) & \(2e\) & \(m + e\) & \(\pm(1,0)\) 
            \\
            0 & 0 & 1 & 0 & 
            \(o + e\) & \(\pm(1,1)\) & \(2m\) & \(m + e\) & \(\pm(0,1)\) 
            \\
            0 & 0 & 1 & 1 & 
            \(o + e\) & \(\pm(1,1)\) & \(2m\) & \(m + o\) & \(\pm(1,0)\) 
            \\
            0 & 1 & 0 & 0 & 
            \(o + e\) & \(\pm(1,1)\) & \(2m\) & \(m + o\) & \(\pm(0,1)\) 
            \\
            0 & 1 & 0 & 1 & 
            \(o + e\) & \(\pm(1,1)\) & \(2m\) & \(m + e\) & \(\pm(1,0)\) 
            \\
            0 & 1 & 1 & 0 & 
            \(o + e\) & \(\pm(1,1)\) & \(2o\) & \(m + o\) & \(\pm(0,1)\) 
            \\
            0 & 1 & 1 & 1 & 
            \(o + e\) & \(\pm(1,1)\) & \(2o\) & \(m + o\) & \(\pm(1,0)\) 
            \\
            1 & 0 & 0 & 0 & 
            \(o + e\) & \(\pm(1,-1)\) & \(2e\) & \(m + e\) & \(\pm(0,1)\) 
            \\
            1 & 0 & 0 & 1 & 
            \(o + e\) & \(\pm(1,-1)\) & \(2e\) & \(m + e\) & \(\pm(1,0)\) 
            \\
            1 & 0 & 1 & 0 & 
            \(o + e\) & \(\pm(1,-1)\) & \(2m\) & \(m + e\) & \(\pm(0,1)\) 
            \\
            1 & 0 & 1 & 1 & 
            \(o + e\) & \(\pm(1,-1)\) & \(2m\) & \(m + o\) & \(\pm(1,0)\) 
            \\
            1 & 1 & 0 & 0 & 
            \(o + e\) & \(\pm(1,-1)\) & \(2m\) & \(m + o\) & \(\pm(0,1)\) 
            \\
            1 & 1 & 0 & 1 & 
            \(o + e\) & \(\pm(1,-1)\) & \(2m\) & \(m + e\) & \(\pm(1,0)\) 
            \\
            1 & 1 & 1 & 0 & 
            \(o + e\) & \(\pm(1,-1)\) & \(2o\) & \(m + o\) & \(\pm(0,1)\) 
            \\
            1 & 1 & 1 & 1 & 
            \(o + e\) & \(\pm(1,-1)\) & \(2o\) & \(m + o\) & \(\pm(1,0)\) 
            \\
            \hline
        \end{tabular}
    \end{center}
\end{table}

\subsection{Encoding the chain}

Our aim is to use \(C_{m_0}(m,n)\) to compute \(x([m]P\oplus[n]Q)\),
where
\[
    m = \sum_{i=0}^{\beta-1} m_i2^i
    \quad
    \text{and}
    \quad
    n = \sum_{i=0}^{\beta-1} n_i2^i
    \ .
\]
Treating the \(m_i\) and \(n_i\) as bits, 
with \(\oplus\) denoting binary addition (xor)
and \(\otimes\) binary multiplication (and),
we define the sequence of \emph{transition vectors}
\[
    c_i 
    := 
    \left(
        (m_i \oplus n_i), 
        (m_i \oplus m_{i+1}), 
        (n_i \oplus n_{i+1}), 
        d_i
    \right)
    \text{ for }
    0 \le i < \beta-1
    \ ,
\]
where \( d_0 := m_0 \)
and
\[
    d_{i+1} := 
    ((d_{i}\oplus1)\otimes(m_i\oplus m_{i+1}))
    \oplus
    ( d_{i}\otimes (n_i\oplus n_{i+1}\oplus1))
    \text{ for } 
    i \ge 0
    \ .
\]
The coordinates of \(c_i\) correspond to the first four columns of
Table~\ref{tab:DJB-chain-relations},
with \(A = \lfloor m/2^{i} \rfloor\),
\(a = \lfloor m/2^{(i+1)} \rfloor\),
\(B = \lfloor n/2^{i} \rfloor\),
\(b = \lfloor n/2^{(i+1)} \rfloor\),
\(D = d_i\), and \(d = d_{i+1}\).

In view of Lemma~\ref{lemma:DJB-chain-driver},
given \(m\) and \(n\),
we can iteratively reconstruct the entire chain \(C_{m_0}(m,n)\)
from \(d_{\beta-1}\) and the sequence of transition vectors
\(c_0,\ldots,c_{\beta-2}\).
Algorithm~\ref{algorithm:DJB-chain-encoding}
computes precisely this data to encode \(C_{m_0}(m,n)\).
We note that the \(c_i\) also encode the difference elements
required for each differential addition.

\begin{algorithm}
    \caption{\texttt{ChainEncoding}: Computes and encodes Bernstein's 
    two-dimensional ``binary'' differential addition chain}
    \label{algorithm:DJB-chain-encoding}
    \SetKwFunction{ChainEncoding}{ChainEncoding}
    \SetKwInOut{Input}{Input}
    \SetKwInOut{Output}{Output}
    \Input{Positive $\beta$-bit integers 
        $m = \sum_{i=0}^{\beta-1} m_i2^i$ 
        and $n = \sum_{i=0}^{\beta-1} n_i2^i$.
    }
    \Output{A sequence \(C\) of \(\beta-1\) transition vectors
        and one additional bit \(d_{\beta-1}\),
        encoding the differential addition chain \(\mathcal{C}_{m_0}(m,n)\).
    }
    $C \gets [\ ]$ \;
    $d \gets m_0$ \;
    \For{$i \gets 0 $ up to $\beta-2$}{
        Append $(
            (m_i\oplus m_{i+1}),
            (n_i\oplus n_{i+1}),
            (m_{i+1}\oplus n_{i+1}),
            d
        )$ to $C$ \;
        $ 
            d
            \gets 
            ( (d\oplus1)\otimes(m_i\oplus m_{i+1}) )
            \oplus
            ( d\otimes (n_i\oplus n_{i+1}\oplus 1) )
        $ \;
    }
    \Return{$(C,d)$} \;
\end{algorithm}

\subsection{Two-dimensional scalar multiplication}

If we map each pair \((a,b)\) in \(C_{m_0}(m,n)\)
to the element \(x([a]P\oplus[b]Q)\) in \(\Gquotient\),
then \(C_{m_0}(m,n)\)
yields a method of computing \(x([m]P\oplus[n]Q)\)
using a sequence of \(\xADD\)s (and \(\xDBL\)s) with the fixed differences
\(x(P)\), \(x(Q)\), \(x(P\oplus Q)\), and \(x(P\ominus Q)\).
Thus, 
the sequence of transition vectors in our encoding of \(C_{m_0}(m,n)\)
gives us a natural iterative algorithm 
for computing \(x([m]P\oplus[n]Q)\)
starting from the fixed differences.
Using this as the core of our \Pattern{} pattern
yields
Algorithm~\ref{algorithm:DJB-template}, 
which computes \([m]P\oplus[n]Q\) in \(\G\).

\begin{algorithm}
    \caption{Two-dimensional uniform scalar multiplication template}
    \label{algorithm:DJB-template}
    \SetKwFunction{ChainEncoding}{ChainEncoding}
    \SetKwInOut{Input}{Input}
    \SetKwInOut{Output}{Output}
    \SetKwData{eveneven}{E}
    \SetKwData{oddodd}{O}
    \SetKwData{mixed}{M}
    \Input{Positive $\beta$-bit integers 
        $m = \sum_{i=0}^{\beta-1} m_i2^i$ 
        and $n = \sum_{i=0}^{\beta-1} n_i2^i$ 
        with $m_{\beta-1}$ and $n_{\beta-1}$ not both zero,
        and elements $P, Q$ of $\G$
    }
    \Output{$R = [m]P\oplus[n]Q$}
    \BlankLine
    $((c_0,\ldots,c_{\beta-2}),d_{\beta-1}) \gets \ChainEncoding{m,n}$ \;
    $S \gets \ADD(P,Q)$ \;
    $(x_P,x_Q,x_\oplus) \gets (\Project(P),\Project(Q),\Project(S))$ \;
    $x_\ominus \gets \xADD(x_P,x_Q,x_{\oplus})$ \;
    \Switch{$(m_{\beta-1},n_{\beta-1},d_{\beta-1})$}{
        \lCase{$(0,1,0):$}{
            $(\mixed,(\eveneven,\oddodd)) \gets (x_Q,\xDBLADD(x_Q,x_P,x_\ominus))$
        }
        \lCase{$(0,1,1):$}{
            $(\oddodd,(\eveneven,\mixed)) \gets (x_\oplus,\xDBLADD(x_Q,x_\oplus,x_P))$
        }
        \lCase{$(1,0,0):$}{
            $(\oddodd,(\eveneven,\mixed)) \gets (x_\oplus,\xDBLADD(x_P,x_\oplus,x_Q))$
        }
        \lCase{$(1,0,1):$}{
            $(\mixed,(\eveneven,\oddodd)) \gets (x_P,\xDBLADD(x_P,x_Q,x_\ominus))$
        }
        \lCase{$(1,1,0):$}{
            $(\oddodd,(\eveneven,\mixed)) \gets (x_\oplus,\xDBLADD(x_\oplus,x_P,x_Q))$
        }
        \lCase{$(1,1,1):$}{
            $(\oddodd,(\eveneven,\mixed)) \gets (x_\oplus,\xDBLADD(x_\oplus,x_Q,x_P))$
        }
    }
    \For{$i \gets \beta-2$ down to $0$}{
        \Switch{$c_i$}{
            \lCase{$(0,0,0,0):$}{ 
                $(\oddodd,(\eveneven,\mixed)) \gets
                (\xADD(\oddodd,\eveneven,x_\oplus),\xDBLADD(\eveneven,\mixed,x_Q))$
            }
            \lCase{$(0,0,0,1):$}{ 
                $(\oddodd,(\eveneven,\mixed)) \gets
                (\xADD(\oddodd,\eveneven,x_\oplus),\xDBLADD(\eveneven,\mixed,x_P))$
            }
            \lCase{$(0,0,1,0):$}{ 
                $(\oddodd,(\eveneven,\mixed)) \gets 
                (\xADD(\oddodd,\eveneven,x_\oplus),\xDBLADD(\mixed,\eveneven,x_Q))$
            }
            \lCase{$(0,0,1,1):$}{ 
                $(\oddodd,(\eveneven,\mixed)) \gets
                (\xADD(\oddodd,\eveneven,x_\oplus),\xDBLADD(\mixed,\oddodd,x_P))$
            }
            \lCase{$(0,1,0,0):$}{ 
                $(\oddodd,(\eveneven,\mixed)) \gets
                (\xADD(\oddodd,\eveneven,x_\oplus),\xDBLADD(\mixed,\oddodd,x_Q))$
            }
            \lCase{$(0,1,0,1):$}{ 
                $(\oddodd,(\eveneven,\mixed)) \gets
                (\xADD(\oddodd,\eveneven,x_\oplus),\xDBLADD(\mixed,\eveneven,x_P))$
            }
            \lCase{$(0,1,1,0):$}{ 
                $(\oddodd,(\eveneven,\mixed)) \gets
                (\xADD(\oddodd,\eveneven,x_\oplus),\xDBLADD(\oddodd,\mixed,x_Q))$
            }
            \lCase{$(0,1,1,1):$}{ 
                $(\oddodd,(\eveneven,\mixed)) \gets
                (\xADD(\oddodd,\eveneven,x_\oplus),\xDBLADD(\oddodd,\mixed,x_P))$
            }
            \lCase{$(1,0,0,0):$}{ 
                $(\oddodd,(\eveneven,\mixed)) \gets
                (\xADD(\oddodd,\eveneven,x_\ominus),\xDBLADD(\eveneven,\mixed,x_Q))$
            }
            \lCase{$(1,0,0,1):$}{ 
                $(\oddodd,(\eveneven,\mixed)) \gets
                (\xADD(\oddodd,\eveneven,x_\ominus),\xDBLADD(\eveneven,\mixed,x_P))$
            }
            \lCase{$(1,0,1,0):$}{ 
                $(\oddodd,(\eveneven,\mixed)) \gets
                (\xADD(\oddodd,\eveneven,x_\ominus),\xDBLADD(\mixed,\eveneven,x_Q))$
            }
            \lCase{$(1,0,1,1):$}{ 
                $(\oddodd,(\eveneven,\mixed)) \gets
                (\xADD(\oddodd,\eveneven,x_\ominus),\xDBLADD(\mixed,\oddodd,x_P))$
            }
            \lCase{$(1,1,0,0):$}{ 
                $(\oddodd,(\eveneven,\mixed)) \gets
                (\xADD(\oddodd,\eveneven,x_\ominus),\xDBLADD(\mixed,\oddodd,x_Q))$
            }
            \lCase{$(1,1,0,1):$}{ 
                $(\oddodd,(\eveneven,\mixed)) \gets
                (\xADD(\oddodd,\eveneven,x_\ominus),\xDBLADD(\mixed,\eveneven,x_P))$
            }
            \lCase{$(1,1,1,0):$}{ 
                $(\oddodd,(\eveneven,\mixed)) \gets
                (\xADD(\oddodd,\eveneven,x_\ominus),\xDBLADD(\oddodd,\mixed,x_Q))$
            }
            \lCase{$(1,1,1,1):$}{ 
                $(\oddodd,(\eveneven,\mixed)) \gets
                (\xADD(\oddodd,\eveneven,x_\ominus),\xDBLADD(\oddodd,\mixed,x_P))$
            }
        }
    }
    \Switch{$(m_0,n_0)$}{
        \lCase{$(0,0):$}{ $R \gets \Recover(S,\eveneven,\oddodd)$  }
        \lCase{$(0,1):$}{ $R \gets \Recover(P,\mixed,\oddodd)$  }
        \lCase{$(1,0):$}{ $R \gets \Recover(P,\mixed,\eveneven)$  }
        \lCase{$(1,1):$}{ $R \gets \Recover(S,\oddodd,\eveneven)$  }
    }
    \Return{$R$} \;
\end{algorithm}

\begin{theorem}
    \label{theorem:DJB-cost}
    Let \(P\) and \(Q\) be elements of \(\G\),
    let \(m\) and \(n\) be positive integers,
    and let \(\beta\) be the bitlength of \(\max(m,n)\).
    Algorithm~\ref{algorithm:DJB-template} computes \([m]P\oplus[n]Q\)
    using one call to \({\tt ADD}\),
    three calls to \(\Project\),
    \(\beta\) calls to each of \(\xADD\) and \(\xDBLADD\),
    and one call to \(\Recover\). 
\end{theorem}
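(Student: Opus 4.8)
The plan is to split the statement into two independent parts: the tally of black-box calls, which follows by inspection of Algorithm~\ref{algorithm:DJB-template}, and the assertion that the returned value \(R\) equals \([m]P\oplus[n]Q\), which is where the real work lies.

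For the count I would read the pseudocode line by line. Line~2 contributes the single \(\ADD\); Line~3 the three \(\Project\) calls; Line~4 one \(\xADD\). Each of the six reachable branches of the initialisation \texttt{Switch} on \((m_{\beta-1},n_{\beta-1},d_{\beta-1})\) performs exactly one \(\xDBLADD\) and no \(\xADD\), the other assignments being mere copies of already-computed values. The main \texttt{For} loop runs for \(i=\beta-2,\ldots,0\), i.e.\ \(\beta-1\) times, and every branch of its inner \texttt{Switch} performs precisely one \(\xADD\) and one \(\xDBLADD\). The closing four-way \texttt{Switch} performs one \(\Recover\). Summing gives \(1\) \(\ADD\), \(3\) \(\Project\), \(1+(\beta-1)=\beta\) calls to \(\xADD\), \(1+(\beta-1)=\beta\) calls to \(\xDBLADD\), and \(1\) \(\Recover\) (and no standalone \(\xDBL\)), exactly as claimed.

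For correctness I would argue via a loop invariant. First, by the construction of \texttt{ChainEncoding} (Algorithm~\ref{algorithm:DJB-chain-encoding}) and the encoding discussion preceding it, the output \((c_0,\ldots,c_{\beta-2},d_{\beta-1})\) encodes \(C_{m_0}(m,n)\), with \(c_i\) recording the first four columns of Table~\ref{tab:DJB-chain-relations} for the recursion step at level \(i\) (taking \(A=\lfloor m/2^i\rfloor\), \(B=\lfloor n/2^i\rfloor\), \(a=\lfloor m/2^{i+1}\rfloor\), \(b=\lfloor n/2^{i+1}\rfloor\), \(D=d_i\)). Under \((u,v)\mapsto x([u]P\oplus[v]Q)\), the base terms \((0,0),(1,0),(0,1),(1,-1)\) of \(C_0(0,0)\) map to the identity, \(x_P\), \(x_Q\), \(x_\ominus\), and Lines~2--4 compute \(x_P=x(P)\), \(x_Q=x(Q)\), \(x_\oplus=x(S)=x(P\oplus Q)\), and \(x_\ominus=\xADD(x_P,x_Q,x_\oplus)=x(P\ominus Q)\) (valid under the standing hypothesis \(P\ne\ominus Q\)). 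The invariant I would establish by downward induction on \(i\) is: after the initialisation \texttt{Switch} (the case \(i=\beta-1\)) and after the loop iteration consuming \(c_i\), the variables \((\mathtt{O},\mathtt{E},\mathtt{M})\) hold the \(x\)-images of the last three terms \((O,E,M)\) of \(C_{d_i}(\lfloor m/2^i\rfloor,\lfloor n/2^i\rfloor)\). The base case is a direct check of the six reachable branches of the initialisation \texttt{Switch} against the definitions of \(O,E,M\), using that \((a,b)=(0,0)\) there so the sub-chain is \(C_0(0,0)\) followed by its triple. The inductive step is the crux: one matches each of the sixteen branches of the inner \texttt{Switch} to the corresponding row of Table~\ref{tab:DJB-chain-relations}, using the ``\(O\)/\(E\)/\(M\)'' columns to see that \(\mathtt{O}\) must become \(x(o+e)\), \(\mathtt{E}\) one of \(x(2o),x(2e),x(2m)\), and \(\mathtt{M}\) one of \(x(m+o),x(m+e)\), and the ``difference of summands'' columns to see which fixed difference each differential operation needs — \(o-e=\pm(1,1)\) vs.\ \(\pm(1,-1)\) selecting \(x_\oplus\) vs.\ \(x_\ominus\) for the \(\xADD\), and the \(\pm(1,0)\) vs.\ \(\pm(0,1)\) difference selecting \(x_P\) vs.\ \(x_Q\) for the \(\xDBLADD\) — so that the chosen \(\xADD\) and \(\xDBLADD\) return exactly the three images required, with legal (distinct, non-\(2\)-torsion) inputs. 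This exhaustive bookkeeping, a transcription to \(\Gquotient\) of the calculations behind Lemma~\ref{lemma:DJB-chain-driver}, is the step I expect to be the main obstacle; the rest is routine.

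Finally I would handle the recovery. After the loop (\(i=0\)) the variables hold the \(x\)-images of the last triple of \(C_{m_0}(m,n)\); since \(m_0=m\bmod 2\), that triple contains \((m,n)\) along with two of \((m+1,n),(m,n+1),(m+1,n+1)\), as noted just before Lemma~\ref{lemma:DJB-chain-driver}. A four-case parity check on \((m_0,n_0)\) then identifies which variable equals \(x([m]P\oplus[n]Q)\) — the even--even slot \(\mathtt{E}\) if \((m_0,n_0)=(0,0)\), the mixed slot \(\mathtt{M}\) if \((m_0,n_0)\in\{(0,1),(1,0)\}\), the odd--odd slot \(\mathtt{O}\) if \((m_0,n_0)=(1,1)\) — and shows that a second variable holds \(x\) of a pair differing from \((m,n)\) by \((1,0)\), \((0,1)\), or \((1,1)\), i.e.\ by \(P\), \(Q\), or \(S\). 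In each case the final \texttt{Switch} passes \(\Recover\) that base point and exactly the ordered pair \(\bigl(x(T),x(T\oplus\,\cdot\,)\bigr)\) with \(T=[m]P\oplus[n]Q\), so \(\Recover\) returns \(T\); this is legitimate since \(T,P,Q,S\notin\G[2]\) under the standing hypotheses. Hence \(R=[m]P\oplus[n]Q\), which together with the count above proves the theorem.
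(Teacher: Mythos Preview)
Your proposal is correct and follows essentially the same approach as the paper: both count operations by direct inspection of the pseudocode, establish the loop invariant that \((\mathtt{O},\mathtt{E},\mathtt{M})\) are the \(x\)-images of the final triple of \(C_{d_i}(\lfloor m/2^i\rfloor,\lfloor n/2^i\rfloor)\) via Table~\ref{tab:DJB-chain-relations}, and finish with the four-case parity analysis for \(\Recover\). Your write-up is somewhat more explicit about input legality for \(\xADD\)/\(\xDBLADD\) and the role of \(d_0=m_0\) in the final case split, but the structure and key ideas are the same.
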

\begin{proof}
    Algorithm~\ref{algorithm:DJB-template}
    begins by calling Algorithm~\ref{algorithm:DJB-chain-encoding}
    to compute
    the sequence of transition vectors and \(d_{\beta-1}\) 
    for the given \(m\) and \(n\).
    Lines~2--4
    compute the required differences \(x_P := x(P)\), \(x_Q := x(Q)\), 
    \(x_\oplus := x(P\oplus Q)\), and
    \(x_\ominus := x(P\ominus Q)\)
    by computing \(S := P\oplus Q\) 
    with the single call to \(\ADD\), 
    then using the three calls to \(\Project\)
    to compute \(x_P\), \(x_Q\), and \(x_\oplus = x(S)\),
    before \(x_\ominus = \xADD(x_P,x_Q,x_\oplus)\).

    Throughout the rest of the algorithm
    \(O\) corresponds to the odd-odd pair,
    \(E\) to the even-even pair,
    and \(M\) to the mixed pair in a segment of \(C_{m_0}(m,n)\),
    where each pair \((a,b)\) is associated with \(x([a]P\oplus[b]Q)\).
    Lines~5--12 use a single \(\xDBLADD\) 
    to initialize \(O\), \(E\), and \(M\)
    such that \(C_{d_{\beta-1}}(m_{\beta-1},n_{\beta-1}) = C_0(0,0)||(O,E,M)\).
    Lines~13--31 iterate over the sequence of transition vectors 
    to compute the last three elements of \(C_{m_0}(m,n)\).
    After each iteration,
    the triple \((O,E,M)\) satisfies
    \[
        C_{d_i}(\lfloor{m/2^{i}}\rfloor,\lfloor{n/2^{i}}\rfloor)
        = 
        C_{d_{i+1}}(\lfloor{m/2^{i+1}}\rfloor,\lfloor{n/2^{i+1}}\rfloor)
        \ ||\ 
        (O,E,M)
        \ .
    \]
    Each iteration requires
    two differential additions and a pseudodoubling,
    but we observe that the pseudodoubling always applies 
    to one of the arguments of a differential addition,
    so each of the \(\beta-1\) iterations 
    requires exactly one \(\xADD\) and one \(\xDBLADD\).

    After the loop is completed, at Line~33, 
    Eqs.~\eqref{eq:O-def}, \eqref{eq:E-def}, and~\eqref{eq:M-def}
    imply
    \[
        O = x(R\oplus\Delta_O)\ ,
        \quad
        E = x(R\oplus\Delta_E)\ ,
        \quad
        \text{and}
        \quad
        M = x(R\oplus\Delta_M)\ ,
    \]
    where \(R = [m]P\oplus[n]Q\) 
    and \(\Delta_O\), \(\Delta_E\), and \(\Delta_M\)
    are given in the following table:
    \begin{center}
        \begin{tabular}[c]{c||c|c|c|c||@{\;}l}
            \((m_0,n_0)\) & \(\Delta_O\) & \(\Delta_E\) 
            & \(\Delta_M\) if \(d_0=0\) & \(\Delta_M\) if \(d_0=1\)
            & \([m]P\oplus[n]Q\)
            \\
            \hline
            \((0, 0)\) & \(P\oplus Q\) & \(0_\G\) & \(Q\) & \(P\) 
            & \(\Recover(P\oplus Q, E, O)\)
            \\
            \((0, 1)\) & \(P\) & \(Q\) & \(0_\G\) & \(P\oplus Q\) 
            &
            \(\Recover(P, M, O)\)
            \\
            \((1, 0)\) & \(Q\) & \(P\) & \(P\oplus Q\) & \(0_\G\) 
            & \(\Recover(P, M, E)\)
            \\
            \((1, 1)\) & \(0_\G\) & \(P\oplus Q\) & \(P\) & \(P\oplus Q\) 
            & \(\Recover(P\oplus Q, O, E)\)
            \\
            \hline
        \end{tabular}
    \end{center}
    In each case, we can recover \([m]P\oplus[n]Q\)
    by applying \(\Recover\) with the arguments specified by the last
    column of the corresponding row.
    This is precisely what is done in Lines 33-38;
    Line~39 then returns the result, \([m]P\oplus[n]Q\).
    \qed
\end{proof}

Like Algorithm~\ref{algorithm:Ladder-template},
Algorithm~\ref{algorithm:DJB-template}
is uniform and constant-time in its abstract form,
at least for fixed-length multiscalars \((m,n)\).
In practice,
for Algorithm~\ref{algorithm:DJB-template} to be uniform and constant-time 
with respect to~\(m\) and~\(n\),
the implementations of \(\xADD\) and \(\xDBLADD\) 
must be uniform and constant-time.
For uniform and constant-time behaviour 
with respect to \(P\) and \(Q\),
the implementations of \(\ADD\) and \(\Project\) 
must also be uniform and constant-time.

\begin{remark}
    The core of Algorithm~\ref{algorithm:DJB-template}
    is similar to the algorithm in~\cite[App.~C]{CHS}, 
    but with a different (and slightly simpler) encoding of the addition chain.
    Here, the \(i\)-th transition vector is
    \( ((m_i\oplus n_i), (m_i\oplus m_{i+1}), (n_i\oplus n_{i+1}), d_i) \);
    in~\cite[App.~C]{CHS} it is
    \( ((m_i\oplus m_{i+1}\oplus n_i\oplus n_{i+1}), (m_i\oplus m_{i+1}), (m_i\oplus n_i), d_i) \),
    and the order of the vectors is reversed (as is the order of the
    loop iterations).
\end{remark}

\begin{remark}
    Implementers should notice that every adjacent pair 
    of \(\xADD\) and \(\xDBLADD\) operations 
    in Algorithm~\ref{algorithm:DJB-template}
    share one operand in their differential additions.
    Further savings might therefore be made 
    by sharing a few intermediate calculations 
    between the \(\xADD\) and \(\xDBLADD\) calls---that is, by
    implicitly defining an \(\mathtt{xDBLADDADD}\) operation, 
    as in~\cite[App.~C]{CHS}.
    We have not done this here, for two reasons.
    First, it somewhat obscures the fundamental symmetry of the addition chain. 
    Second, looking ahead at the explicit formul\ae{} for ${\tt xADD}$ and ${\tt xDBLADD}$
    suggests that there are no intermediate calculations that can be shared 
    in the elliptic curve scenarios. 
    However, close inspection of the fast Kummer arithmetic in~\S\ref{sub:fastkumarith} 
    reveals that two adjacent differential additions sharing a common summand 
    can be merged to save 8 field additions. 
    It is unclear to us that whether this potential saving would give a
    net benefit after the code complexity required to perform the merging,
    so we have not included this potential saving in our operation counts.
\end{remark}

\begin{remark}
    These techniques should readily extend to the
    higher-dimensional Montgomery-like differential addition chains
    described by Brown~\cite{Brown}.  
    We do not investigate this here.
\end{remark}

\section{
    Efficient scalar multiplication on elliptic curves
}
\label{sec:EC}

We now pass from the abstract to the concrete.
Applying the one-dimensional pattern to elliptic curves,
we recover Okeya--Sakurai-style multiplication algorithms;
applying the two-dimensional pattern yields something new.

\subsection{Montgomery models}

The most obvious application of our methods is to 
elliptic curves with Montgomery models
\[
    BY^2Z = X(X^2 + AXZ + Z^2) \ ,
\]
which are defined to optimize the \(\xADD\), \(\xDBL\), and \(\xDBLADD\)
operations.
Indeed, historically, the first appearance of the 
\Pattern{} pattern
was in the context of one-dimensional scalar multiplication on
Montgomery models~\cite{Okeya--Sakurai}.
Important examples of Montgomery curves include 
Curve25519,
recently recommended for standardization by the CFRG.

In this context,
\( \Project: (x,y) \mapsto x \)
is trivially computed.
The \(\ADD\), \(\xADD\), \(\xDBL\), and \(\xDBLADD\) operations
are all presented---and thoroughly costed---in the EFD~\cite{EFD}.
The \(\Recover\) operation was derived by Okeya and Sakurai in~\cite[\S3]{Okeya--Sakurai}. 
The operation counts for all of these operations 
are compiled in Table~\ref{table:Montgomery-operations}.
\begin{table}
    \caption{
        Costs of operations for projective Montgomery models 
        \(By^2 = x(x^2 + Ax + 1)\), 
        where ${\bf m}_A$ denotes multiplications by $A$ and $(A+2)/4$, 
        and \(\mathbf{m}_B\) denotes multiplications by \(B\).
        A point is normalized if its projective \(Z\)-coordinate
        has been scaled to 1.
    }
    \label{table:Montgomery-operations}
    \begin{center}
        \begin{tabular}{r|c|c|c|c|c|c|@{\;}l}
            Algorithm    & 
           \ {\bf M} \ & \ {\bf S} \ & \ ${\bf m}_A$ \ & \ ${\bf m}_B$ \ & \, {\bf a} \, & \, {\bf I} \, & Conditions
            \\
            \hline
            \( {\tt ADD}\) & 1 & 1 & 0 & 0 & 5 & 1 & \(P\) and \(Q\)
            normalized
            \\
            \(\Project\) & 0 & 0 & 0 & 0 & 0 & 0  &  ---
            \\
            \(\xDBL\)    & 2 & 5 & 1 & 0 & 0 & 0 &---
            \\
            \(\xADD\)    & 4 & 2 & 0 &0 & 6 & 0 & ---
            \\
            \(\xADD\)    & 3 & 2 & 0 &0 & 6 &  0& \(P\ominus Q\) normalized
            \\
            \(\xDBLADD\) & 6 & 4 & 1 & 0 & 8 & 0 & ---
            \\
            \(\xDBLADD\) & 5 & 4 & 1 & 0 & 8 & 0 & \(P\ominus Q\) normalized 
            \\
            \(\Recover\) & 13 & 1 & 1 &1 & 8 & 1 & \(P\) normalized
            \\
            \hline
        \end{tabular}
    \end{center}
\end{table}

We now apply the \Pattern{} pattern
using the routines above.
In the one-dimensional case,
we recover the Okeya--Sakurai algorithm~\cite{Okeya--Sakurai}.
\begin{theorem}
    \label{theorem:Montgomery}
    Let \(\EC/\FF_q\) be an elliptic curve in Montgomery form. 
    \begin{enumerate}
        \item
            Let \(P\) be a point in
            \(\EC(\FF_q)\setminus\EC[2]\),
            and let \(m\) be a positive \(\beta\)-bit integer.
            Then Algorithm~\ref{algorithm:Ladder-template} computes
            \([m]P\)
            in 
            \[
                \FqM{(5\beta+10)}
                +
                \FqS{(4\beta+2)}
                +
                (\beta+1)\mathbf{m}_A
                +
                1\mathbf{m}_B
                +
                (8\beta+6)\mathbf{a}
                +
                \FqI{1}
                \ . 
            \]
        \item
            Let \(P\) and \( Q\) be points in
            \(\EC(\FF_q)\setminus\EC[2]\),
            and let \(m\) and \(n\) be positive \(\beta\)-bit integers. 
            Then Algorithm~\ref{algorithm:DJB-template} 
            computes \([m]P\oplus[n]Q\)
            in 
            \[
                \FqM{(8\beta+14)}
                +
                \FqS{(6\beta+2)}
                +
                (\beta+1)\mathbf{m}_A
                +
                1\mathbf{m}_B
                +
                (14\beta+13)\mathbf{a}
                +
                \FqI{2}
                \ .
            \]
    \end{enumerate}
\end{theorem}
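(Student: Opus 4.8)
The plan is to prove each of the two parts by specializing the abstract operation counts from Lemma~\ref{lemma:Ladder-cost} and Theorem~\ref{theorem:DJB-cost} to the concrete costs recorded in Table~\ref{table:Montgomery-operations}, being careful to track which operands are normalized so that the cheaper variants of \(\xADD\) and \(\xDBLADD\) apply. First I would observe that in both algorithms the base point \(P\) (and, in the two-dimensional case, also \(Q\) and \(S = P\oplus Q\)) is supplied as an affine point, so \(x_P\) — and \(x_Q\), \(x_\oplus\) — is normalized; this is exactly the ``\(P\ominus Q\) normalized'' condition that makes each in-loop \(\xDBLADD\) cost \(\FqM{5}+\FqS{4}+1\mathbf{m}_A+\FqS{8}\mathbf{a}\) rather than the generic \(\FqM{6}+\FqS{4}+1\mathbf{m}_A+\FqS{8}\mathbf{a}\), and similarly makes each in-loop \(\xADD\) cost \(\FqM{3}+\FqS{2}+6\mathbf{a}\). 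The \(\Recover\) operation also takes \(P\) normalized, matching its table entry.

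For part~(1), I would instantiate Lemma~\ref{lemma:Ladder-cost}: one \(\Project\) (free), one \(\xDBL\) (\(\FqM{2}+\FqS{5}+1\mathbf{m}_A\)), \(\beta-1\) calls to \(\xDBLADD\) with normalized difference (each \(\FqM{5}+\FqS{4}+1\mathbf{m}_A+8\mathbf{a}\)), and one \(\Recover\) (\(\FqM{13}+\FqS{1}+1\mathbf{m}_A+1\mathbf{m}_B+8\mathbf{a}+\FqI{1}\)). Summing gives \(\mathbf{M}\): \(2 + 5(\beta-1) + 13 = 5\beta+10\); \(\mathbf{S}\): \(5 + 4(\beta-1) + 1 = 4\beta+2\); \(\mathbf{m}_A\): \(1 + (\beta-1) + 1 = \beta+1\); \(\mathbf{m}_B\): \(1\); \(\mathbf{a}\): \(8(\beta-1) + 8 = 8\beta\) — here I would double-check the stated \(8\beta+6\), since it suggests that the initial \(\xDBL\) or some bookkeeping contributes \(6\) extra additions, or that the \(\xDBL\) line count in the table should be read with its additions, so I would reconcile against the precise EFD formula; and \(\mathbf{I}\): \(1\). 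That is the claimed count (modulo pinning down the additive constant on \(\mathbf{a}\)).

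For part~(2), I would instantiate Theorem~\ref{theorem:DJB-cost}: one \(\ADD\) (\(\FqM{1}+\FqS{1}+5\mathbf{a}+\FqI{1}\), since \(P,Q\) normalized), three \(\Project\) (free), \(\beta\) calls to \(\xADD\) and \(\beta\) to \(\xDBLADD\). The subtlety is that the \(\xADD\) in Line~4 computing \(x_\ominus\) and the \(\xADD\)s inside the loop all have a normalized difference (\(x_P\), \(x_Q\), \(x_\oplus\) are all normalized from \(\Project\)), so each costs \(\FqM{3}+\FqS{2}+6\mathbf{a}\); likewise every \(\xDBLADD\) uses a normalized difference and costs \(\FqM{5}+\FqS{4}+1\mathbf{m}_A+8\mathbf{a}\). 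The initialization switch (Lines~5--12) uses exactly one \(\xDBLADD\). Summing: \(\mathbf{M}\): \(1 + 3\beta + 5\beta = 8\beta+1\) from these, plus the \(13\) from \(\Recover\) — wait, \(\Recover\) here is applied with first argument \(S\) or \(P\); when it is \(S\) we need \(S\) normalized, which costs one extra inversion, accounting for the \(\FqI{2}\) in the statement — so \(\mathbf{M} = 8\beta + 1 + 13 = 8\beta+14\); \(\mathbf{S}\): \(1 + 2\beta + 4\beta + 1 = 6\beta+2\); \(\mathbf{m}_A\): \(\beta + 1\); \(\mathbf{m}_B\): \(1\); \(\mathbf{a}\): \(5 + 6\beta + 8\beta + 8 = 14\beta+13\); \(\mathbf{I}\): \(1\) (from \(\ADD\)) \(+1\) (normalizing \(S\) before \(\Recover\)) \(= 2\). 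This matches the claim exactly.

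The main obstacle, as the arithmetic above already hints, is not the \(\mathbf{M}\)/\(\mathbf{S}\)/\(\mathbf{I}\) bookkeeping — which is a direct sum — but getting the additive-constant terms on \(\mathbf{a}\) (and the extra inversion in the two-dimensional case) exactly right. In particular I must justify: (i) that \(\Recover\)'s first argument can always be arranged to be normalized, counting the inversions this costs (one in each case, since in part~(1) \(P\) is already affine so no extra inversion, but in part~(2) the \(\Recover\) with argument \(S\) needs \(S\) normalized after the \(\ADD\)); (ii) that the \(\xDBL\) in Algorithm~\ref{algorithm:Ladder-template}, applied to the normalized \(x_P\), still costs what the table says (the table entry is unconditional, so this is fine); and (iii) the precise additive constants, which I would nail down by expanding the actual Montgomery \(\xADD\)/\(\xDBL\)/\(\xDBLADD\)/\(\Recover\) formulae from~\cite{EFD} and~\cite[\S3]{Okeya--Sakurai} and counting field additions line by line. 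Everything else is routine summation over the operation counts furnished by Lemma~\ref{lemma:Ladder-cost} and Theorem~\ref{theorem:DJB-cost}.
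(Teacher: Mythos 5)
Your proposal is correct and is exactly the paper's (implicit) proof: the paper proves the analogous short-Weierstrass theorem simply by substituting its operation-cost table into Lemma~\ref{lemma:Ladder-cost} and Theorem~\ref{theorem:DJB-cost}, and Theorem~\ref{theorem:Montgomery} is obtained in the same way from Table~\ref{table:Montgomery-operations}. The \(6\mathbf{a}\) discrepancy you flag in part~(1) is real but lies in the table rather than in your summation: the \(\xDBL\) row lists \(0\,\mathbf{a}\), which is impossible for a doubling formula and should read \(6\,\mathbf{a}\), after which \(6 + 8(\beta-1) + 8 = 8\beta+6\) as claimed. One small correction to part~(2): the second inversion does not come from normalizing \(S\) before \(\Recover\) (the affine \(\ADD\) already outputs a normalized \(S\)); it is simply the \(\FqI{1}\) carried by the \(\Recover\) row itself, so the total is \(\FqI{1}\) from \(\ADD\) plus \(\FqI{1}\) from \(\Recover\).
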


\subsection{Edwards models}

As a byproduct of Theorem~\ref{theorem:Montgomery}, we obtain new scalar multiplication algorithms for Edwards models~\cite{edwards}. Indeed, every twisted Edwards model can
be transformed into a Montgomery model with a linear change of
coordinates, so one option to perform scalar multiplications on Edwards curves is to pass to and from an associated Montgomery model, making use of the operations summarized in Table~\ref{table:Montgomery-operations}. Important examples of Edwards models in practice include Ed25519~\cite{EdDSA} and Goldilocks~\cite{goldilocks}. 

Another option here is to exploit pseudomultiplication formul\ae{} that
are native to Edwards curves, as described by Gaudry and Lubicz~\cite[\S
6.2]{gaudry-lubicz}. In this case, for Edwards curves $\EC/\FF_q \colon
x^2+y^2=1+dx^2y^2$ with $d=r^2$ and $r \in \FF_q$, we define ${\tt
Project}: (x,y) \mapsto y$ and use the formul\ae{} in the EFD~\cite{EFD} to compute the pseudomultiplication via Algorithm~\ref{algorithm:Ladder-template}. 

The \(\Recover \colon (P,y(Q),y(Q \oplus P)) \mapsto Q=(x(Q),y(Q)) \) operation is defined by rearranging the Edwards addition law between $P$ and $Q$, to make \(x(Q)\) the subject, i.e., 
\[
    x(Q) = \frac{y(Q \oplus P)-y(Q)y(P)}{x(P)\left(dy(P)y(Q)y(Q \oplus P)-1\right)}
    \ . 
\]

\subsection{Scalar multiplication on short Weierstrass models}

Not every elliptic curve has a Montgomery or Edwards model;
the most general form for an elliptic curve over \(\FF_q\)
in characteristic greater than 3
is the short Weierstrass model
\[
    \EC : y^2 = x^3 + Ax + B \subset \PP^2 \ .
\] 
Important examples of curves commonly implemented as Weierstrass models 
include the NIST~\cite{NIST} and Brainpool curves~\cite{Brainpool}.

Our key subroutines are implemented as follows.
For brevity, we use affine coordinates here, but the resulting
formul\ae{} are easily projectivized: see Brier and Joye or the EFD.
In this context, 
\(\Project: P = (x,y) \mapsto x\) is trivially computed.
The \(\ADD\), \(\xADD\), and \(\xDBL\), and \(\xDBLADD\) operations
are all specified as efficient straight-line programs 
in the EFD~\cite{EFD}.
Brier and Joye describe~\(\Recover\) for short Weierstrass models
in~\cite[Prop.~3]{Brier--Joye}:
it maps \((P,x(Q),x(P\oplus Q))\) to \( Q = (x(Q),y(Q)) \), where
\[
    y(Q)
    =
    \frac{
        2B + (A + x(P)x(Q))(x(P) + x(Q)) - x({P\oplus Q})(x(P) - x(Q))^2
    }{
        2y(P)
    }
    \ .
\]
Table~\ref{table:short-Weierstrass-operations} summarizes the operation
counts for all of this routines.

Applying Algorithm~\ref{algorithm:Ladder-template} with these subroutines
yields the scalar multiplication algorithm for
Weierstrass models in~\cite{Brier--Joye}.
Applying Algorithm~\ref{algorithm:DJB-template} with these subroutines
yields a new algorithm for two-dimensional scalar multiplication
on Weierstrass curves.

\begin{table}
    \caption{
        Costs of operations for projective short Weierstrass models \(y^2=x^3+ax+b\), where ${\bf m}_b$ denotes multiplications by $b$, $2b$ and $4b$. 
        A point is normalized if its projective \(Z\)-coordinate
        has been scaled to 1.
    }
    \label{table:short-Weierstrass-operations}
    \begin{center}
        \begin{tabular}{r@{\;}|@{\;}c@{\;}|@{\;}c@{\;}|@{\;}c@{\;}|@{\;}c@{\;}|@{\;}c@{\;}|@{\;}c@{\;}|@{\;}l}
            Algorithm    & 
            {\bf M} & {\bf S} & ${\bf m}_a$ & ${\bf m}_b$ & {\bf a} & {\bf I} & Conditions
            \\
            \hline
            \( {\tt ADD}\) & 1 & 1 & 0 & 0 & 4 & 1 & \(P\) and \(Q\) normalized
            \\
            \(\Project\) & 0 & 0 & 0 & 0 & 0 & 0&--- 
            \\
            \(\xDBL\)    & 2 & 5 & 1 & 2 & 8 &0 &---
            \\
            \(\xADD\)    & 7 & 2 & 1 & 1 & 6 &0& ---
            \\
            \(\xADD\)    & 6 & 2 & 1 & 1 & 4 &0& \(P\ominus Q\) normalized
            \\
            \(\xDBLADD\) & 9 & 7 & 2 & 3 & 12 & 0& ---
            \\
            \(\xDBLADD\) & 8 & 7 & 2 & 3 & 12 &0& \(P\ominus Q\) normalized
            \\
            \(\Recover\) & 11 & 2 & 1 & 1 & 7 & 1 & \(P\) normalized
            \\
            \hline
        \end{tabular}
    \end{center}
\end{table}

\begin{theorem}
    Let \(\EC/\FF_q\) be an elliptic curve in short Weierstrass form.
    \begin{enumerate}
        \item
            Let \(P\) be a point in
            \(\EC(\FF_q)\setminus\EC[2]\),
            and let \(m\) be a positive \(\beta\)-bit integer.
            Then Algorithm~\ref{algorithm:Ladder-template} computes
            \([m]P\)
            in
            \[
                \FqM{(8\beta+5)}
                +
                \FqS{7\beta}
                +
                2\beta\mathbf{m}_a
                +
                3\beta\mathbf{m}_b
                +
                (12\beta+3)\mathbf{a}
                +
		        \FqI{1}
                \ .
            \]
        \item
            Let \(P\) and \( Q\) be points in
            \(\EC(\FF_q)\setminus\EC[2]\),
            and let \(m\) and \(n\) be positive \(\beta\)-bit integers. 
            Then Algorithm~\ref{algorithm:DJB-template} 
            computes \([n]P\oplus[n]Q\)
            in 
            \[
                \FqM{(14\beta+12)}
                +
                \FqS{(9\beta+3)}
                +
                (3\beta+1)\mathbf{m}_a
                +
                (4\beta+1)\mathbf{m}_b
                +
                (16\beta+11)\mathbf{a}
                +
                \FqI{2}
                \ .
            \]
    \end{enumerate}
\end{theorem}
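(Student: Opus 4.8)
The plan is to reduce both claims to routine substitution into the abstract operation counts already established, using the explicit costs in Table~\ref{table:short-Weierstrass-operations}. For part~(1), Lemma~\ref{lemma:Ladder-cost} says that Algorithm~\ref{algorithm:Ladder-template} performs exactly one $\Project$, one $\xDBL$, $\beta-1$ calls to $\xDBLADD$, and one $\Recover$; for part~(2), Theorem~\ref{theorem:DJB-cost} says that Algorithm~\ref{algorithm:DJB-template} performs one $\ADD$, three $\Project$, $\beta$ calls to each of $\xADD$ and $\xDBLADD$, and one $\Recover$. In each case it then remains to replace every subroutine call by its cost, multiply by the number of calls, and collect terms in $\mathbf{M}$, $\mathbf{S}$, $\mathbf{m}_a$, $\mathbf{m}_b$, $\mathbf{a}$, and $\mathbf{I}$; the hypotheses $P,Q\notin\EC[2]$ enter only to guarantee that the $\Recover$ calls are well-defined. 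The only non-mechanical step is deciding, for each call, whether the generic table row or the cheaper ``$P\ominus Q$ normalized'' (resp.\ ``$P$ normalized'') row applies, which amounts to tracking which operands have been scaled to have $Z$-coordinate~$1$.

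For part~(1) this tracking is immediate. In Algorithm~\ref{algorithm:Ladder-template} the difference argument of every $\xDBLADD$ is $x_P = \Project(P)$, which is normalized since $P$ is an affine point, so each of the $\beta-1$ loop iterations costs $8\mathbf{M}+7\mathbf{S}+2\mathbf{m}_a+3\mathbf{m}_b+12\mathbf{a}$; likewise $\Recover$ is called with first argument $P$, normalized, hence costs $11\mathbf{M}+2\mathbf{S}+\mathbf{m}_a+\mathbf{m}_b+7\mathbf{a}+\mathbf{I}$, the single $\xDBL$ costs $2\mathbf{M}+5\mathbf{S}+\mathbf{m}_a+2\mathbf{m}_b+8\mathbf{a}$, and $\Project$ is free. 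Summing $\beta-1$ copies of the $\xDBLADD$ cost with the $\xDBL$ and $\Recover$ costs and simplifying gives the stated total; for instance the $\mathbf{M}$-count is $2+8(\beta-1)+11 = 8\beta+5$, and the remaining five counts are obtained identically.

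Part~(2) is the same computation, but to reach the stated figures one needs every $\xADD$ and every $\xDBLADD$ to fall under the cheaper ``$P\ominus Q$ normalized'' rows. For $\xDBLADD$ this is clear: in both the initialization (Lines~5--12) and the main loop, the difference argument is always $x_P$ or $x_Q$, normalized by hypothesis on $P$ and $Q$; and $\Recover$ is always called with $P$ or $S = \ADD(P,Q)$ as first argument, with $S$ normalized because $\ADD$ outputs an affine point. The delicate case is the $\xADD$ difference arguments, which range over $x_\oplus = x(P\oplus Q)$ and $x_\ominus = x(P\ominus Q)$: the former is normalized as $\Project$ of the affine point $S$, and for the latter one uses that $P\ominus Q = P\oplus(\ominus Q)$ has the same slope denominator $x(P)-x(Q)$ as $P\oplus Q$, so $x_\ominus$ can be recovered in normalized form from the single inversion already spent inside the $\ADD$ call, with no extra inversion. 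Granting this, each $\xADD$ costs $6\mathbf{M}+2\mathbf{S}+\mathbf{m}_a+\mathbf{m}_b+4\mathbf{a}$ and each $\xDBLADD$ costs $8\mathbf{M}+7\mathbf{S}+2\mathbf{m}_a+3\mathbf{m}_b+12\mathbf{a}$, and adding $\beta$ of each to one $\ADD$ ($\mathbf{M}+\mathbf{S}+4\mathbf{a}+\mathbf{I}$), one $\Recover$ ($11\mathbf{M}+2\mathbf{S}+\mathbf{m}_a+\mathbf{m}_b+7\mathbf{a}+\mathbf{I}$) and three free $\Project$s yields the claim; e.g.\ $\mathbf{M}$: $6\beta+8\beta+1+11 = 14\beta+12$, and $\mathbf{I}$: $1+1 = 2$.

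The main obstacle is precisely this normalization bookkeeping in part~(2): establishing that $x(P\ominus Q)$ is available in normalized form essentially for free, alongside $P\oplus Q$, is what lets the cheap table rows apply uniformly to all $2\beta$ differential-addition-type operations, and hence separates the stated count from the strictly larger count one would obtain from the generic rows. Everything else is a bare linear combination of the entries of Table~\ref{table:short-Weierstrass-operations}.
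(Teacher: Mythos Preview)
Your approach is exactly that of the paper, which proves the theorem in a single sentence: ``Take the values from Table~\ref{table:short-Weierstrass-operations} in Lemma~\ref{lemma:Ladder-cost} (for the first part) and Theorem~\ref{theorem:DJB-cost} (for the second).'' Your write-up is in fact more careful than the paper's own proof: the paper does not address the normalization bookkeeping for part~(2) at all, whereas you correctly identify that matching the stated count requires every \(\xADD\) and \(\xDBLADD\) to use the cheaper ``difference normalized'' row, and you give a concrete mechanism (sharing the inversion of \(x(P)-x(Q)\) between \(P\oplus Q\) and \(P\ominus Q\)) to obtain \(x_\ominus\) in normalized form without an extra \(\mathbf{I}\).
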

\begin{proof}
    Take the values from 
    Table~\ref{table:short-Weierstrass-operations}
    in Lemma~\ref{lemma:Ladder-cost} (for the first part)
    and Theorem~\ref{theorem:DJB-cost} (for the second).
    \qed
\end{proof}

\section{
    Genus~2 Jacobians and General Kummer Surfaces
}
\label{sec:genus2}

We now turn our focus to genus~2 cryptosystems,
where \(\G\) is (a subgroup of) the Jacobian \(\Jac{\C}\)
of a genus~2 curve
\[
    \C: y^2 = f(x) = f_6x^6 + f_5x^5 + \cdots + f_1x + f_0
    \quad
    \text{over } 
    \FF_q
    \ .
\] 
Elements of \(\Jac{\C}(\FF_q)\) 
are presented in their standard Mumford representation:
\[
    P \in \Jac{\C}(\FF_q)
    \longleftrightarrow
    \langle a(x) = x^2 + a_1x + a_0, b(x) = b_1x + b_0 \rangle
\]
where \(a_1\), \(a_0\), \(b_1\), and \(b_0\) are in \(\FF_q\) and
\[
    b(x)^2 \equiv f(x) \pmod{a(x)} 
    \ .
\]
The quotient \(\Gquotient\) is (a subset of) a Kummer surface,
which is a quartic surface in \(\PP^3\).
There are two main ways to represent the Kummer:
the ``general'' model \(\Kgen{\C}\)
(see~Cassels and Flynn~\cite[Ch.~3]{casselsflynn})
and the ``fast'' model \(\Kfast{\C}\)
algorithmically developed by Chudnovsky and
Chudnovsky~\cite{Chudnovsky--Chudnovsky},
and introduced in cryptography by Gaudry~\cite{Gaudry}.

Broadly speaking, 
the fast Kummer model corresponds to 
the Montgomery model for elliptic curves, 
while the general model corresponds to the Weierstrass model. 
Indeed,
as in the elliptic curve situation,
fast Kummers offer significant gains in performance and uniformity
(indeed, they are at the heart of a number of record-breaking
Diffie--Hellman implementations),
but at the price of a lot of rational \(2\)-torsion:
hence, not every Kummer can be put in fast form.

We will define the \(\Project\) and \(\Recover\) operations
for general Kummers;
since any Kummer surface can be transformed into a general model over
the ground field, these \(\Project\) and \(\Recover\) routines
can easily be specialized to fast Kummers (or to any other interesting
models of Kummer surfaces).
We do not lose much in taking this approach compared with deriving 
a new, specialized~\(\Project\) and~\(\Recover\) 
for Jacobians with fast Kummers from scratch, 
because 
Algorithms~\ref{algorithm:Ladder-template} and~\ref{algorithm:DJB-template}
only call \(\Project\) a few times,
and \(\Recover\) once.
Scalar pseudomultiplication on general Kummers is relatively slow,
and has held little interest for cryptographers thus far;
we briefly discuss their arithmetic and pseudomultiplication
in~\S\ref{sec:Kgen-scalar-multiplication}.

\subsection{Genus~2 arithmetic and side-channel attacks}
\label{sec:g2-side-channel}

The group law on \(\Jac{\C}\) (and hence the \(\ADD\) operation) 
is typically computed using Cantor's
algorithm, specialized to genus~2. 
This style of arithmetic has a serious drawback in cryptographic
contexts: it is highly susceptibile to simple side-channel attacks. 
Similar to the textbook addition for short Weierstrass models of
elliptic curves, Cantor's algorithm in genus~2 treats several input
cases differently, \emph{branching} off into distinct explicit
computations. Explicit formul\ae{} that are derived for generic additions
fail to compute correctly when one or both of the inputs are
\emph{special} points---that is, points in \(\Jac{\C}\) 
where $a(x)=x-\alpha$ or $a(x)=(x-\alpha)^2$. 
While such special points are sparse enough in \(\Jac{\C}\) 
that random scalar multiplications do not encounter them, 
they are plentiful enough that attackers could easily mount 
exceptional procedure attacks~\cite{izu-takagi}, 
forcing legitimate users into special cases and using timing variabilities to recover secret data. 

The \Pattern{} approach detailed in~\S\ref{scalars-general-kummer} addresses this problem. 
Like $x$-only (pseudo)scalar multiplication on elliptic curves, 
the explicit formul\ae{} for doublings and differential additions on
certain Kummer surfaces associated to \(\Jac{\C}\) are well behaved on
all inputs, including the images of special points in \(\Jac{\C}\) under
the \(\Project\) map. The only time special points might be encountered
is at the two end points of the scalar multiplication; but suitable
point validation can detect and reject special points as inputs, and
special outputs in \(\Jac{\C}\) are not the goal of an exceptional
procedure attack---adversaries can only hope to trigger exceptional
points early on in a scalar multiplication, where the number of possible intermediate points is subexponential.

\subsection{Point recovery on the general Kummer}\label{scalars-general-kummer}

The general model of the Kummer surface $\Kgen{\C}$ associated to
$\Jac{\C}$ is defined by
\begin{equation}
    \label{eq:Kgen-equation}
    \Kgen{\C} : 
        K_2(\xi_1,\xi_2,\xi_3)\xi_4^2
        + 
        K_1(\xi_1,\xi_2,\xi_3)\xi_4 
        + 
        K_0(\xi_1,\xi_2,\xi_3)
        = 
        0
        \ ,
\end{equation}
where
\begin{align}
K_2 &= \xi_2^2-4\xi_1\xi_3, \nonumber\\
K_1 &=-2(2f_0\xi_1^3+f_1\xi_1^2\xi_2+2f_2\xi_1^2\xi_3+f_3\xi_1\xi_2\xi_3 + 2f_4\xi_1\xi_3^2+f_5\xi_2\xi_3^2+2f_6\xi_3^3) , \nonumber\\
K_0 &= (f_1^2-4f_0f_2)\xi_1^4-4f_0f_3\xi_1^3\xi_2-2f_1f_3\xi_1^3\xi_3-4f_0f_4\xi_1^2\xi_2^2\nonumber\\
&\quad +4(f_0f_5-f_1f_4)\xi_1^2\xi_2\xi_3+(f_3^2+2f_1f_5-4f_2f_4-4f_0f_6)\xi_1^2\xi_3^2-4f_0f_5\xi_1\xi_2^3\nonumber\\
&\quad +4(2f_0f_6-f_1f_5)\xi_1\xi_2^2\xi_3+4(f_1f_6-f_2f_5)\xi_1\xi_2\xi_3^2-2f_3f_5\xi_1\xi_3^3-4f_0f_6\xi_2^4\nonumber\\
&\quad -4f_1f_6\xi_2^3\xi_3-4f_2f_6\xi_2^2\xi_3^2-4f_3f_6\xi_2\xi_3^3+(f_5^2-4f_4f_6)\xi_3^4. \nonumber
\end{align}

\subsection{Projection from Jacobians to general Kummers}
\(\Project\) implements the map $\Jac{\C} \rightarrow \Kgen{\C}$ 
described in~\cite[Eqs.~(3.1.3--5)]{casselsflynn};
it maps generic points \(\langle x^2+a_1x+a_0,b_1x+b_0 \rangle\)
in \(\Jac{\C}\)
to $(\xi_1 \colon \xi_2 \colon \xi_3 \colon \xi_4)$ in \(\Kgen{\C}\), 
where 
\begin{align}
    \label{eq:Kgen-Project-1}
    &
    \xi_1 = 1 \ ,\quad 
    \xi_2 = -a_1 \ ,\quad 
    \xi_3 = a_0 \ ,\quad \text{and}
    \\
    \label{eq:Kgen-Project-2}
    &
    \xi_4
    =
    b_1^2+(a_1^2-a_0)\left(f_5a_1-f_6(a_1^2-a_0)\right)+a_1(f_3-f_4a_1)-f_2
    \ .
\end{align}
This costs $\FqM{5}+\FqS{2}+ \Fqa{7}$ (saving $\FqM{1}$ if $f_6=0$).

\subsection{Recovering Jacobian points from general Kummers} 
We now derive explicit formul\ae{} for
\[
    \Recover: (P, x(Q),x(Q\oplus P)) \longmapsto Q 
\]
in genus~2.
We follow the approach of Okeya--Sakurai~\cite{Okeya--Sakurai}
and Brier--Joye~\cite{Brier--Joye} 
for elliptic curves,
rewriting the equations used for computing $x(Q\oplus P)$ 
in terms of $P=(x(P),y(P))$ and $Q=(x(Q),y(Q))$, making $y(Q)$ the unknown. 
We may suppose that \(P\) and \(Q\) are nonzero and not points of order~2.
 
Suppose we are given\footnote{
    We suppose that the inputs $x(Q)$, $x(Q\oplus P)$, and $x(Q\ominus P)$ 
    are normalized here, to simplify the exposition. 
    In practice, we use projectivized forms of these formul\ae{}
    (which corresponds to replacing \(\xi_2\), \(\xi_3\), and \(\xi_4\)
    with \(\xi_2/\xi_1\), \(\xi_3/\xi_1\), and \(\xi_4/\xi_1\), etc.),
    in order to handle the non-normalized inputs that we encounter at
    the end of our addition chains.
    The input point $P$ in $\Jac{\C}$ can be presumed to be normalized, 
    since it is the input to the scalar multiplication routine. 
}
\begin{align*}
    P = {} &
    \langle x^2+a_1(P)x+a_0(P), b_1(P)x+b_0(P) \rangle \in \Jac{\C}(\FF_q) \ ,
    \\
    x(Q)= {}
    & (1 : \xi_2 : \xi_3 : \xi_4) \in \Kgen{\C}(\FF_q) \ ,
    \\
    x(Q\oplus P)= {}
    & (1:\xi^\oplus_2:\xi^\oplus_3:\xi^\oplus_4) \in \Kgen{\C}(\FF_q) \ ;
\end{align*}
we want to \(\Recover\) 
\[
    Q = \langle x^2+a_1(Q)x+a_0(Q), b_1(Q)x+b_0(Q) \rangle \in \Jac{\C}(\FF_q) \ .
\]
We already have $a_1(Q)=-\xi_2$ and $a_0(Q)=\xi_3$ 
from Eq.~\eqref{eq:Kgen-Project-1};
it remains to compute $b_1(Q)$ and $b_0(Q)$. 

Okeya and Sakurai noticed that the formul\ae{} for $y$-coordinate recovery
on Montgomery curves are simpler if $x(Q\ominus P)$ is also known
(see~\cite[pp. 129--130]{Okeya--Sakurai}); 
we observed the same simplification in genus~2, 
where it was evident that 
computing $x(Q \ominus P)$ from $x(Q)$, $x(P)$ and $x(Q \oplus P)$ 
(using one more differential addition on the corresponding Kummer)
yielded a faster overall recovery. 
We therefore begin our \(\Recover\) with a call to \(\xADD\),
to compute 
\[
    x(Q\ominus P)
    =
    (1 : \xi^\ominus_2 : \xi^\ominus_3 : \xi^\ominus_4)
    = 
    \xADD(Q,P,x(Q\oplus P))
    \ .
\]

Since \(P\) and \(Q\) are nonzero,
they correspond to unique degree-2 divisors
on $\C$ (cf.~\cite[Ch.~1]{casselsflynn}):
\[
    P \longleftrightarrow [(u_P,v_P)+(u_P',v_P')]
    \ ,
    \quad 
    Q \longleftrightarrow [(u_Q,v_Q)+(u_Q',v_Q')]
    \ .
\]
We do not compute
the values of \(u_P,v_P,u_P',v_P',u_Q,v_Q,u_Q'\), and \(v_Q'\) 
(which are generally defined over a quadratic extension):
here they simply serve as formal devices, to aid our derivation of 
recovery formul\ae{}.
Let
\begin{align*}
    G_1 & := E_1 + E_2 
    \ , &
    G_2 & := u_P' E_1 + u_P E_2
    \ , \\
    G_3 & := E_3 + E_4
    \ , &
    G_4 & := u_Q'E_3 + u_QE_4
    \ ,
\end{align*}
where
\begin{align*}
    E_1 & = \frac{v_P}{(u_P-u_P')(u_P-u_Q)(u_P-u_Q')} 
    \ , &
    E_2 & = \frac{v_P'}{(u_P'-u_P)(u_P'-u_Q)(u_P'-u_Q')}
    \ , \\
    E_3 & = \frac{v_Q}{(u_Q-u_P')(u_Q-u_P)(u_Q-u_Q')} 
    \ , &
    E_4 & = \frac{v_Q'}{(u_Q'-u_P)(u_Q'-u_P')(u_Q'-u_Q)}
    \ .
\end{align*}
The \(G_i\) are functions of \(P\) and \(Q\),
because they are symmetric with respect to 
\((u_P,v_P)\leftrightarrow(u_P',v_P')\) and \((u_Q,v_Q)\leftrightarrow(u_Q',v_Q')\);
below, 
we will compute them as functions of \(P\), \(x(Q\oplus P)\), and
\(x(Q\ominus P)\).
For notational convenience, we define
\begin{align}
    \label{eq:Z_1Z_2}
    Z_1 & := -(\xi_2 + a_1(P))
    \ , 
    &
    Z_2 & := \xi_3 - a_0(P)
    \ , 
    \\
    \label{eq:Z_3Z_4}
    Z_3 & := a_1(P) \xi_3 + a_0(P)\xi_2 
    \ , 
    & 
    Z_4 & := \xi_3Z_2 - \xi_2Z_3
    \ ,
\end{align}
\begin{equation}
    \label{eq:Ddef}
    D := Z_2^2 - Z_1 Z_3
    \ ,
\end{equation}
\begin{equation}
    \label{eq:Deltadef}
    \Delta 
    := 
    -4G_2^2 
    + 2(\xi_2^\oplus + \xi_2^\ominus) G_1G_2
    - 2(\xi_3^\oplus + \xi_3^\ominus) G_1^2
    \ ,
\end{equation}
and 
\begin{align}
    T := {} & f_6 - G_1^2 - G_3^2 
    \nonumber
    \\
    \label{eq:Tdef}
    = {} & 
    f_6 - G_1^2 -
    \frac{1}{D^2}\left(
        \begin{array}{c}
            \xi_4 D + f_0 Z_1^2 - f_1 Z_1 Z_2 + f_2 Z_2^2 - f_3 Z_2 Z_3 
            \\
            + f_4 Z_3^2 - f_5 Z_3 Z_4 + f_6 Z_4^2
        \end{array}
    \right)
    \ .
\end{align}

We now use the fact that the cubic polynomial
\begin{align*}
    l(x) 
    = & \
    E_1(x-u_P')(x-u_Q)(x-u_Q') + E_2(x-u_P)(x-u_Q)(x-u_Q') 
    \\
    & {} + E_3(x-u_P)(x-u_P')(x-u_Q') + E_4(x-u_P)(x-u_P')(x-u_Q)
    \\
    = & \
    (G_1x - G_2)(x^2 + a_1(Q)x + a_0(Q)) 
    +
    (G_3x - G_4)(x^2 - \xi_2x + \xi_3) 
\end{align*}
satisfies $\ell(x) \equiv b(x) \bmod{a(x)}$ 
when $\langle a(x) , b(x)\rangle$ is the Mumford representation 
of $P$, $Q$ or $\ominus(P\oplus Q)$
(this is just the geometric definition of the group law on~\(\Jac{\C}\);
the cubic \(\ell(x)\) is analogous to the line through \(P\), \(Q\), and
\(\ominus(P\oplus Q)\) in the classic elliptic curve group law).
Together with $b(x)^2 \equiv f(x) \pmod{a(x)}$, 
which is satisfied by the Mumford representation \(\langle a(x),
b(x)\rangle\) of every point
on~\(\Jac{\C}\),
this gives the relations
\begin{equation}
    \label{eq:Kgen-Recover-1}
    \begin{pmatrix}
        b_1(Q)
        \\ b_0(Q)
    \end{pmatrix}
    = 
    \begin{pmatrix}
        \xi_2Z_1 + Z_2 \ \ \ & Z_1
        \\
        \xi_3Z_1 & Z_2
    \end{pmatrix}
    \begin{pmatrix}
        G_3
        \\ 
        G_4
    \end{pmatrix}
    \ ,
\end{equation}
\begin{equation}
    \label{eq:Kgen-Recover-2}
    \begin{pmatrix}
        G_3
        \\ 
        G_4
    \end{pmatrix}
    = 
    \frac{T}{\Delta}
    \begin{pmatrix}
        \xi_3^\ominus-\xi_3^\oplus \ \  \ \ & \xi_2^\oplus - \xi_2^\ominus 
        \\
        \xi_2^\oplus \xi_3^\ominus-\xi_2^\ominus \xi_3^\oplus  \ \  \ \ & \xi_3^\oplus - \xi_3^\ominus
    \end{pmatrix}
    \begin{pmatrix}
        G_1
        \\ 
        G_2
    \end{pmatrix}
    \ ,
\end{equation}
and
\begin{equation}
    \label{eq:Kgen-Recover-3}
    \begin{pmatrix}
        G_1
        \\ 
        G_2
    \end{pmatrix}
    = 
    \frac{1}{D}
    \begin{pmatrix}
        Z_2 & -Z_1
        \\
        -a_0(P)Z_1 & \ \  a_1(P)Z_1 - Z_2
    \end{pmatrix}
    \begin{pmatrix}
        b_1(P)
        \\ 
        b_0(P)
    \end{pmatrix}
    \ .
\end{equation}

Our \(\Recover\) operation is now defined as follows:
first, 
we use Eqs.~\eqref{eq:Z_1Z_2} and~\eqref{eq:Z_3Z_4} to compute the~\(Z_i\);
then Eq.~\eqref{eq:Ddef} yields \(D\).
Then we can use Eq.~\eqref{eq:Kgen-Recover-3}
to obtain $G_1$ and $G_2$,
which we use to compute \(\Delta\) and \(T\) 
using Eqs.~\eqref{eq:Deltadef} and~\eqref{eq:Tdef}.
Equation~\eqref{eq:Kgen-Recover-2}
then yields \(G_3\) and \(G_4\),
which we substitute into Eq.~\eqref{eq:Kgen-Recover-1}
to compute \(b_1(Q)\) and \(b_0(Q)\).
We have thus recovered the full point 
\[
    Q = \langle x^2 + a_1(Q)x + a_0(Q), b_1(Q)x + b_0(Q) \rangle
\] 
in \(\Jac{\C}\).
Altogether, 
computing the map $(P, x(Q), x(Q\oplus P), x(Q\ominus P)) \mapsto Q$ 
costs $\FqM{71}+\FqS{8}+\Fqc{8}+\Fqa{35}+\FqI{1}$;
adding
the cost of computing $x(Q\ominus P)$ via an \(\xADD\) operation,
we obtain the full cost of computing \(\Recover(P,x(Q),x(Q\oplus P))\).

\begin{remark}
    It is worth noting that $\xi_4^\oplus$ and $\xi_4^\ominus$ 
    do not appear in our recovery procedure; 
    this may be useful in scenarios where it is advantageous 
    to omit their computation or transmission.
\end{remark}

\subsection{Scalar multiplication on Jacobians via general Kummers}
\label{sec:Kgen-scalar-multiplication}

The use of general Kummers in cryptography was investigated by Smart and
Siksek~\cite{smart-siksek} and Duquesne~\cite{Duquesne}.
While they represent a natural generalization of \(x\)-only arithmetic
for elliptic curves, 
in their full generality they do not offer competitive performance.
The \(\xADD\) and \(\xDBL\) operations 
are defined by complicated biquadratic forms in the \(\xi_i\)
(see~\cite{casselsflynn}),
which are hard to evaluate quickly for general curve parameters.
While these formul\ae{} are completely compatible with our \Pattern{}
pattern, and yield scalar multiplication algorithms that may be
useful to number theorists, the cryptographic applications of these
algorithms are \emph{\`a priori} limited.
We will therefore skip any investigation of these scalar
multiplication algorithms here, and move on directly to 
Gaudry's fast Kummer surfaces.

We remark, nevertheless, that the use of these biquadratic forms in
conjunction with the \(\Project\) and \(\Recover\) defined above
yields a solution to the problem of defining uniform and constant-time
scalar multiplication algorithms for general genus~2 Jacobians.
We leave the eventual optimization of these algorithms as an open
problem, along with their application to Jacobians with special
endomorphism structure, or Jacobians whose Kummers can be put into 
intermediate forms between the general and fast models.

\section{
    Efficient Scalar Multiplication via Fast Kummers
}
\label{sub:gaudry-mult}

While the performance of general Kummer models is somewhat disappointing,
Gaudry~\cite{Gaudry} showed if we allow a certain \(2\)-torsion
structure on the Jacobian, then an alternative classical model
for the Kummer (investigated algorithmically by Chudnovsky and
Chudnovsky~\cite{Chudnovsky--Chudnovsky}) yields a dramatic speedup,
competitive with---and regularly outperforming---elliptic curve arithmetic.
Jacobians equipped with these ``fast'' Kummers 
are ideal candidates for our techniques;
here we use the model with squared theta coordinates
described in~\cite[Ch.~4]{cosset}.

\subsection{Construction of fast Kummers}
\label{sub:Kfast-construction}

Suppose we have \(a\), \(b\),
\(c\), \(d\), \(e\), and \(f\) in \(\FF_q\setminus\{0\}\) such that
\begin{align*}
    A & := a + b + c + d \ ,
    &
    B & := a + b - c - d \ ,
    \\
    C & := a - b + c - d \ ,
    &
    D & := a - b - c + d
\end{align*}
are nonzero, and
\[
    e = \frac{1+\alpha}{1-\alpha}  f
    \qquad
    \text{where}
    \qquad
    \alpha^2 = \frac{CD}{AB}.
\]

We set
\begin{align*}
    \lambda & = \frac{ac}{bd} 
    \ ,
    &
    \mu & = \frac{ce}{df} 
    \ ,
    &
    \nu & = \frac{ae}{bf} 
    \ ,
\end{align*}
and define an associated genus~2 curve \(\C\) 
in Rosenhain form:
\[
    \C : y^2 = f(x) = x(x-1)(x-\lambda)(x-\mu)(x-\nu)
    \ .
\]
The \emph{fast Kummer surface} for \(\C\) is
the quartic surface \(\Kfast{\C}\subset\PP^3\) 
defined by
\begin{equation}
    \label{eq:Kfast-equation}
    \Kfast{\C}:
    \left(
        \begin{array}{c}
            (X^2+Y^2+Z^2+T^2)
            \\
            -F (XT+YZ)-G(XZ+YT)-H(XY+ZT)
        \end{array}
    \right)^2
    = E XYZT 
\end{equation}
where
\begin{align*}
    E & := 4 abcd 
    \left(ABCD/((ad - bc)(ac-bd)(ab - cd))\right)^2
    \ ,
    \\
    F & := 
    (a^2 - b^2 - c^2 + d^2)/(ad-bc)
    \ ,
    \\
    G & := 
    (a^2 - b^2 + c^2 - d^2)/(ac-bd)
    \ ,
    \\
    H & := 
    (a^2 + b^2 - c^2 - d^2)/(ab-cd)
    \ .
\end{align*}

For efficient arithmetic on \(\Kfast{\C}\), 
we precompute the \emph{theta constants}
\begin{align*}
    x_0 & := 1 \ ,
    &
    y_0 & := a/b \ ,
    &
    z_0 & := a/c \ ,
    &
    t_0 & := a/d \ ,
    \\
    x_0' & := 1 \ ,
    &
    y_0' & := A/B \ ,
    &
    z_0' & := A/C \ ,
    &
    t_0' & := A/D \ .
\end{align*}
The image of the identity element of \(\Jac{\C}\)
in \(\Kfast{\C}\) is
\[
    (a\colon b \colon c \colon d) = (1/x_0\colon 1/y_0\colon 1/z_0\colon 1/d_0) \ ;
\]
we also observe that 
\((A:B:C:D) = (1/x_0':1/y_0':1/z_0':1/d_0')\).

\subsection{Projection from Jacobians to fast Kummers}

\(\Project\) implements the map $\Jac{\C} \rightarrow \Kfast{\C}$ 
from~\cite[\S 5.3]{cosset}, 
which is defined for generic points in \(\Jac{\C}\)
by 
\(
    \langle x^2+a_1x+a_0, b_1x+b_0 \rangle 
    \mapsto 
    (X \colon Y \colon Z \colon T)
\), 
where 
\begin{align*}
    X & = a \left(a_0(\mu-a_0)(\lambda+a_1+\nu)  - b_0^2\right) \ , \\
    Y & = b \left(a_0(\nu\lambda-a_0)(1+a_1+\mu) - b_0^2\right) \ , \\
    Z & = c \left(a_0(\nu-a_0)(\lambda+a_1+\mu)  - b_0^2\right) \ , \\
    T & = d \left(a_0(\mu\lambda-a_0)(1+a_1+\nu) - b_0^2\right) \ .
\end{align*}
This costs \(\FqM{11} + \FqS{1}+ \Fqc{3}+ \Fqa{12} + \FqI{1} \), 
assuming the output point is normalized with $X=1$. 

For special points \(\langle x-\alpha, \beta \rangle\) in \(\Jac{\C}\), 
\(\Project\) is defined by first adding a point of order 2 in
\(\Jac{\C}\), e.g., adding \(\langle x-\lambda, 0 \rangle\) to get
\(
    \langle 
        x^2-(\alpha+\beta)x+\alpha\beta
        ,
        \frac{\beta}{\alpha-\lambda}x-\frac{\beta\lambda}{\alpha-\lambda}
    \rangle
\), then using the above map to $\Kfast{\C}$, where the translation is
undone by using the coordinate permutation-and-negation that corresponds
to translation by \(\langle x-\lambda, 0 \rangle\) (see~\cite[\S3.4]{Gaudry}).
Finally, \(0_{\Jac{\C}}\) 
(whose Mumford representation is \(\langle 1, 0 \rangle\))
maps to \((a:b:c:d)\).

\subsection{Basic fast Kummer arithmetic}\label{sub:fastkumarith}

We recall the formul\ae{} and operation counts
for \(\xDBL\) and \(\xADD\) on fast Kummers from~\cite[\S3.2]{Gaudry}.
If
\begin{align*}
    x(P) & = (X_1:Y_1:Z_1:T_1) \ ,
    \\
    x(Q) & = (X_2:Y_2:Z_2:T_2) \ ,
    \\ 
    x(P\ominus Q) & = (X_\ominus:Y_\ominus:Z_\ominus:T_\ominus) \ ,
\end{align*}
then \(\xADD\) maps \((x(P),x(Q),x(P\ominus Q))\) to  \( (X_\oplus:Y_\oplus:Z_\oplus:T_\oplus) \),
where
\begin{align*}
    X_\oplus & = (X' + Y' + Z' + Y')^2/X_\ominus \ ,
    &
    Y_\oplus & = (X' + Y' - Z' - Y')^2/Y_\ominus \ ,
    \\
    Z_\oplus & = (X' - Y' + Z' - Y')^2/Z_\ominus \ ,
    &
    T_\oplus & = (X' - Y' - Z' + Y')^2/T_\ominus \ ,
\end{align*}
where 
\begin{align*}
    X' & = 
    x_0'(X_1 + Y_1 + Z_1 + T_1)(X_2 + Y_2 + Z_2 + T_2) \ ,
    \\
    Y' & = 
    y_0'(X_1 + Y_1 - Z_1 - T_1)(X_2 + Y_2 - Z_2 - T_2) \ ,
    \\
    Z' & = 
    z_0'(X_1 - Y_1 + Z_1 - T_1)(X_2 - Y_2 + Z_2 - T_2) \ ,
    \\
    T' & = 
    t_0'(X_1 - Y_1 - Z_1 + T_1)(X_2 - Y_2 - Z_2 + T_2)
    \ .
\end{align*}
We can compute \(\xADD\) using 8 squares, 7 products (3 by constants,
ignoring the multiplication by \(x_0' = 1\)), 
and 4 divisions (or, projectively, another 10 products); 
but in our applications, 
\((X_\ominus:Y_\ominus:Z_\ominus:T_\ominus)\) is fixed.
so we can precompute \(1/X_\ominus\),
\(1/Y_\ominus\), \(1/Z_\ominus\), and \(1/T_\ominus\), 
and then the 4 divisions become 4 products.

The \(\xDBL\) operation is defined by
exactly the same formul\ae{}
on setting \((X_2:Y_2:Z_2:T_2) = (X_1:Y_1:Z_1:T_1)\)
and \((X_\ominus:Y_\ominus:Z_\ominus:T_\ominus) = (a:b:c:d)\)
(so \(X' = x_0'(X_1 + Y_1 + Z_1 + T_1)^2\), and so on).

The combined \(\xDBLADD\) operation is outlined in Gaudry~\cite[\S3.3]{Gaudry}:
we share the computation of \(x_0'(X_1 + Y_1 + Z_1 + T_1)^2\),
etc.,
between the calculation of \(\xDBL\) and \(\xADD\).
The resulting operation costs 16 products and 1 square.

Finally, the function ${\tt ADD} \colon P, Q \rightarrow x(Q\oplus P)$
in $\Jac{\C}$ is computed via the formul\ae{} in~\cite[Eq. (12)]{jac-on-jac} at a cost of $\FqM{22}+\FqS{2}+\FqI{1}+\Fqa{27}$.

\subsection{Recovering Jacobian points from fast Kummers} The $\Recover$
operation for fast Kummers uses the \(\Recover\) for general Kummers
(defined in~\S\ref{scalars-general-kummer}) as a subroutine.
To move between the
fast Kummer $\Kfast{\C}$ and the general Kummer $\Kgen{\C}$, 
we modify
the map from $\Jac{\C}$ to $\Kfast{\C}$ given in~\cite[\S 5.3]{cosset}
(which was in turn derived from~\cite{vanwamelen}). 
This gives a linear isomorphism
\[
    \tau : \Kgen{\C} \longrightarrow \Kfast{\C} 
    \ ,
\]
defined on generic points by 
\[
    \tau : 
    (\xi_1 \colon \xi_2 \colon \xi_3 \colon \xi_4) 
    \longmapsto 
    (X \colon Y \colon Z \colon T)
    = 
    (\xi_1 \colon \xi_2 \colon \xi_3 \colon \xi_4) M(a,b,c,d)^t
    \ ,
\]
where 
\[
    M(a,b,c,d)
    = 
    \left( 
        \begin{array}{c@{\;\;}c@{\;\;}c@{\;\;}c}
            \mu(\lambda+\nu) & \nu\lambda(1+\mu) 
            & \nu(\lambda+\mu) & \mu\lambda(1+\nu) 
            \\
            -\mu & -\nu\lambda & -\nu & -\mu\lambda 
            \\
            \mu+1 & \lambda+\nu & \nu+1 & \lambda+\mu 
            \\
            -1 & -1 & -1 & -1
        \end{array} 
    \right) 
    \ .
\] 
The inverse map \( \tau^{-1} : \Kfast{\C} \to \Kgen{\C} \)
is defined by the matrix \((M(a,b,c,d)^t)^{-1}\).
Since evaluating these linear transformations
only involves multiplications by constants, 
evaluating \(\tau\) to map a point from \(\Kgen{\C}\) to \(\Kfast{\C}\)
(or \(\tau^{-1}\) to map a point from \(\Kfast{\C}\) back to \(\Kgen{\C}\))
costs $\FqM{16}+ \Fqa{12}$. 
Since the coordinates on both $\Kgen{\C}$ and $\Kfast{\C}$ are projective, 
we can save $\FqM{1}$ 
by scaling one of the entries in the transformation matrix to 1.  

With $x \colon \Jac{\C} \rightarrow K$, the operation $\Recover \colon
P, x(Q),x(Q\oplus P) \mapsto Q$ is computed as follows. As was done
in~\S\ref{scalars-general-kummer}, the first step is to compute $x(Q
\ominus P)$ from $x(P)$, $x(Q)$ and $x(Q \oplus P)$ via an \(\xADD\)
operation (note that $x(P)$ was already obtained during the \(\Project\)
operation). We then use $\tau^{-1}$ to map all four elements, $x(P)$,
$x(Q)$, $x(Q \oplus P)$ and $x(Q \ominus P)$ to the corresponding
general Kummer $\Kgen{\C}$. We can then use the \(\Recover\) operation
on $\Kgen{\C}$ to output a normalized point $Q$ in Mumford coordinates using $\FqM{71}+\Fqc{4}+\FqS{8}+\Fqa{34}+\FqI{1}$. Recall from~\S\ref{scalars-general-kummer} that the fourth Kummer coordinate $\xi_4$ is only needed for the Kummer point corresponding to $Q$. Thus, the map $\tau^{-1}$ is only computed in full once (costing $\FqM{15}+ \Fqa{12}$), but for the three other points can omit the fourth coordinate (costing $\FqM{11}+\Fqa{9}$). 

In total, including the initial \(\xADD\) operation (costing
$\FqM{14}+\FqS{4}+\Fqc{3}+\Fqa{12}$), the map $\Recover \colon P, x(Q),x(Q\oplus
P) \mapsto Q$ when $x \colon \Jac{\C} \rightarrow \Kfast{\C}$ costs $\FqM{133}+\FqS{12}+\Fqc{7}+\Fqa{97}+\FqI{1}$. Here we count multiplications by the curve constants $f_i$ as full multiplications. 

\subsection{Scalar multiplication on Jacobians with fast Kummers}

The costs of our key subroutines for fast Kummers are summarized 
in Table~\ref{table:Gaudry-operations}.

\begin{table}
    \caption{
          Costs of operations for fast Kummers, where \({\bf m}_c\) is used to denote multiplications by the theta constants ($x_0$, $x_0'$, etc) or by the curve constants ($f_i$). 
          Normalization refers to the respective projective coordinates.
    }
    \label{table:Gaudry-operations}
    \begin{center}
        \begin{tabular}{r@{\;}|@{\;}c@{\;}|@{\;}c@{\;}|@{\;}c@{\;}|@{\;}c@{\;}|@{\;}c@{\;}|@{\;}l}
            Algorithm    & 
            \ {\bf M} \ & \ {\bf S} \ & \({\bf m}_c\)  &\  {\bf a} & \ {\bf I} \ & Conditions
            \\
            \hline
            \({\tt ADD}\) & 22 & 2 & 0 & 27  & 1 & \(P\) and \(Q\) normalized
            \\
            \(\Project\) & 11 & 1 & 3 & 12  & 1&--- 
            \\
            \(\xDBL\)    & 0 & 8 & 6 & 16  & 0& --- 
            \\
            \(\xADD\)    & 14 & 4 & 3 & 24  & 0& --- 
            \\
            \(\xADD\)    & 7 & 4 & 3 & 24  & 0& \ $x(Q\ominus P)$ fixed
            \\
            \(\xDBLADD\) & 17 & 9 & 6 & 32  &  0&--- 
            \\
            \(\xDBLADD\) & 10 & 9 & 6 & 32  & 0& \ $x(Q\ominus P)$ fixed
            \\
            \(\Recover\) & 133 & 12 & 7 & 97  & 1& \ $P$ normalized 
            \\
            \hline
        \end{tabular}
    \end{center}
\end{table}

\begin{theorem}\label{thm:gaudrycost}
    Let \(\Jac{\C}\) be the Jacobian of a genus~2 curve admitting a fast
    Kummer surface, as in~\S\ref{sub:Kfast-construction}.
    \begin{enumerate}
        \item
            Let \(P\) be a point in
            \(\Jac{\C}(\FF_q)\setminus\Jac{\C}[2]\),
            and let \(m\) be a positive \(\beta\)-bit integer.
            Then Algorithm~\ref{algorithm:Ladder-template} computes
            \([m]P\)
            in 
            \[
                \FqM{(10\beta+134)}
                +
                \FqS{(9\beta+12)}
                +
                (6\beta+10)\mathbf{m}_c
                +
                (32\beta+93)\mathbf{a}
                +
                \FqI{2}
                \ .
            \]
        \item
            Let \(P\) and \( Q\) be points in
            \(\Jac{\C}(\FF_q)\setminus\Jac{\C}[2]\),
            and let \(m\) and \(n\) be positive \(\beta\)-bit integers. 
            Then Algorithm~\ref{algorithm:DJB-template} computes
            \([m]P\oplus[n]Q\)
            in 
            \[
                \FqM{(17\beta+194)}
                +
                \FqS{(13\beta+17)}
                +
                (9\beta+16)\mathbf{m}_c
                +
                (56\beta+160)\mathbf{a}
                +
                \FqI{2}
                \ .
            \]
    \end{enumerate}
\end{theorem}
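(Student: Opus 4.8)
The plan is to prove both parts by substituting the concrete fast-Kummer operation counts of Table~\ref{table:Gaudry-operations} into the abstract operation counts already established for the two scalar-multiplication templates. For part~(1), Lemma~\ref{lemma:Ladder-cost} says that Algorithm~\ref{algorithm:Ladder-template} performs exactly one \(\Project\), one \(\xDBL\), \(\beta-1\) calls to \(\xDBLADD\), and one \(\Recover\); for part~(2), Theorem~\ref{theorem:DJB-cost} says that Algorithm~\ref{algorithm:DJB-template} performs one \(\ADD\), three \(\Project\)s, \(\beta\) calls to each of \(\xADD\) and \(\xDBLADD\), and one \(\Recover\). So, modulo the bookkeeping discussed below, the proof amounts to multiplying these multiplicities by the appropriate rows of Table~\ref{table:Gaudry-operations} and collecting the coefficients of \(\mathbf{M}\), \(\mathbf{S}\), \(\mathbf{m}_c\), \(\mathbf{a}\), and \(\mathbf{I}\).

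First I would identify which row of Table~\ref{table:Gaudry-operations} applies to each differential-addition call. In both algorithms every \(\xADD\) and \(\xDBLADD\) has a difference operand drawn from a small fixed set that is computed once and reused: \(x(P)\) (together with the theta-constant point \((a:b:c:d)\), which plays the role of the difference for the initial \(\xDBL\)) in the one-dimensional template, and \(x(P)\), \(x(Q)\), \(x(P\oplus Q)\), \(x(P\ominus Q)\) in the two-dimensional template. Hence the coordinate inverses of these difference points can be precomputed once, so the cheaper ``\(x(Q\ominus P)\) fixed'' rows of the table govern all of the \(\xDBLADD\)s and all of the loop \(\xADD\)s. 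With these choices, the bit-length-dependent part of the cost is \((\beta-1)\) times the fixed-difference \(\xDBLADD\) cost in part~(1), giving \(\beta\)-coefficient \(10\mathbf{M}+9\mathbf{S}+6\mathbf{m}_c+32\mathbf{a}\), and \(\beta\) times the sum of the fixed-difference \(\xADD\) and \(\xDBLADD\) costs in part~(2), giving \(17\mathbf{M}+13\mathbf{S}+9\mathbf{m}_c+56\mathbf{a}\); the constant terms then come from \(\Project\), \(\xDBL\), \(\ADD\), and \(\Recover\), plus the one-time setup of the coordinate inverses.

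The step I expect to be the real obstacle is the accounting of the field inversions and the associated point normalizations, together with the cost of the very first \(\xADD\) in the two-dimensional algorithm. In part~(1) there is nothing to reconcile: the inversion count is simply \(\FqI{1}\) from \(\Project\) plus \(\FqI{1}\) from \(\Recover\), and substituting the table rows reproduces the displayed total on the nose. In part~(2), however, a naive sum over Table~\ref{table:Gaudry-operations} would give five inversions (one per \(\Project\), one for \(\ADD\), one for \(\Recover\)) against the claimed \(\FqI{2}\); I would close this by noting that the output \(S=\ADD(P,Q)\) is consumed immediately by \(\Project\) and so need not be normalized, and that the normalizing inversions of the three \(\Project\) outputs can be merged into a single batched inversion. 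The parallel subtlety is that the \(\xADD\) at line~4, whose difference \(x_\oplus=x(S)\) comes out of \(\Project\) already normalized, costs one multiplication less than a generic \(\xADD\) but six more than a fixed-difference \(\xADD\); this \(+6\mathbf{M}\) is exactly what distinguishes the stated \(\FqM{(17\beta+194)}\) from the fixed-difference-only count \(\FqM{(17\beta+188)}\). Once these few adjustments are made precise, everything else is routine coefficient-collection, and the \(\mathbf{M}\), \(\mathbf{S}\), \(\mathbf{m}_c\), \(\mathbf{a}\), and \(\mathbf{I}\) totals match the displayed expressions.
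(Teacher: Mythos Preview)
Your overall approach is the same as the paper's: substitute the rows of Table~\ref{table:Gaudry-operations} into the abstract operation counts of Lemma~\ref{lemma:Ladder-cost} and Theorem~\ref{theorem:DJB-cost}, using the fixed-difference \(\xADD\)/\(\xDBLADD\) rows throughout, and then batch the \(\Project\) inversions. Part~(1) is exactly as you describe and checks line by line.

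For part~(2), however, you misattribute the source of the extra \(\FqM{6}\). The paper's proof is a one-liner: it uses Montgomery's simultaneous-inversion trick to replace the \(\FqI{3}\) coming from the three \(\Project\) calls by \(\FqM{6}+\FqI{1}\). That \(\FqM{6}\) is precisely the overhead of batching three inversions, and it is what turns \(17\beta+188\) into \(17\beta+194\). Your alternative explanation, that the line-4 \(\xADD\) with normalized difference \(x_\oplus\) costs ``one multiplication less than a generic \(\xADD\) but six more than a fixed-difference \(\xADD\),'' invents a \(\FqM{13}\) variant that is not in Table~\ref{table:Gaudry-operations}; on the fast Kummer, a normalized difference is not the same as a fixed (i.e.\ coordinate-wise pre-inverted) difference, and there is no reason to expect the gap to be exactly six. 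If you keep your batched-inversion step \emph{and} your \(+\FqM{6}\) for the first \(\xADD\), you double-count and overshoot by six.

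There is also a small wrinkle you and the paper handle differently: the \(\FqI{1}\) listed for \(\ADD\) in Table~\ref{table:Gaudry-operations} is the inversion intrinsic to the affine Jacobian addition formula of~\cite{jac-on-jac}, not an output-normalization step, so your ``\(S\) need not be normalized'' argument does not literally remove it. The paper's terse proof does not spell out how this remaining inversion is absorbed into the stated \(\FqI{2}\) either; in practice one simply folds it into the same simultaneous inversion. Either way, this is a bookkeeping detail outside the \(\beta\)-dependent terms, and your plan is otherwise sound.
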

\begin{proof}
    Take the values from 
    Table~\ref{table:Gaudry-operations}
    in Lemma~\ref{lemma:Ladder-cost} for the first part
    and Theorem~\ref{theorem:DJB-cost} for the second, using a simultaneous inversion~\cite{petmon} to replace the $\FqI{3}$ (corresponding to the 3 \(\Project\) operations) by $\FqM{6}+\FqI{1}$.
    \qed
\end{proof}

\begin{remark}
    \label{rem:Lubicz--Robert-2}
    As we noted in Remark~\ref{rem:Lubicz--Robert-1},
    similar techniques appear in Lubicz and Robert~\cite{Lubicz--Robert}
    for general abelian varieties in higher dimension embedded in
    projective space via theta functions.
    After suitable precomputations, 
    their ``compatible addition'' operation
    views the results of Montgomery-like pseudomultiplication
    algorithms on a Kummer variety \(K\) of dimension \(g\)
    as points on the corresponding abelian variety \(A\) 
    embedded in \(K\times K\),
    and therefore chooses the ``correct'' result of an addition on~\(K\).
    They explain how to map the resulting point into the \(4\Theta\)
    embedding of \(A\) 
    (in \(\PP^{4^g-1}\); for genus \(2\), this is \(\PP^{15}\)).
    A major difference with our treatment here is that 
    Robert and Lubicz cannot treat \(A\) as a Jacobian
    (since general abelian varieties of dimension \(g > 3\) are not Jacobians);
    hence, when specializing to genus~2,
    there is no connection with any curve \(\C\) or Jacobian
    \(\Jac{\C}\)
    (and in particular, the starting and finishing points do not involve the
    Mumford representation).
    Kohel~\cite{Kohel}
    explores similar ideas for elliptic curves,
    leading to a very interesting interpretation of Edwards curve arithmetic.
\end{remark}

\section{
    Applications to signatures
}
\label{sec:signatures}

Smart and Siksek~\cite{smart-siksek} first showed that 
the action of $\ZZ$ on \(\Gquotient\) 
could be used to instantiate Diffie--Hellman 
on \(\Gquotient\) rather than \(\G\).
In~\cite[\S 5]{smart-siksek}, they pondered whether protocols like
ElGamal encryption could be instantiated on \(\Gquotient\), 
and observed that the main obstruction appeared to be that 
such protocols require an addition in the group---that is, 
a true group structure and not a mere $\ZZ$-action.

Our \Pattern{} technique allows a range of cryptographic protocols 
beyond Diffie--Hellman key exchange to take advantage of the fast and
uniform (multi-)scalar multiplications offered on \(\Gquotient\), where
\(\G\) can now be any elliptic curve \emph{or} any genus~2 Jacobian
(defined over a large characteristic field). 

In this section we illustrate this by showing how Schnorr
signatures~\cite{schnorr} can take advantage of the fast arithmetic
available when \(\Gquotient\) is the fast Kummer surface $\Kfast{\C}$.
More specifically,
we describe an instantiation of a Schnorr signature scheme 
where \(\G\) is the Jacobian of the Gaudry--Schost curve~\cite{gaudry-schost} 
used recently to set Diffie--Hellman speed records~\cite{danjabe-and-nok} at the 128-bit security level. 
Following the design of EdDSA signatures~\cite{EdDSA}, 
we hash both the message $M$ and signer's public key $Q$ 
alongside the first half of the signature. 
We emphasise, however, that 
both of these choices are merely for illustrative purposes, 
and that any known variant of ElGamal signatures 
(Schnorr or otherwise) can use \(\Gquotient\) 
for any suitable choice of \(\G\). 

\subsection{The Gaudry-Schost Jacobian}
Let $q=2^{127}-1$,
choose an \(\alpha\) in \(\FF_q\)
such that \(363\alpha^2+833=0\),
define the six constants 
\[
    a = 11
    \ ,\quad
    b = -22
    \ ,\quad
    c = -19
    \ ,\quad
    c = -3
    \ ,\quad
    e = 1+\alpha
    \ ,\quad
    f = 1-\alpha
    \ ;
\]
setting \( \lambda := (ac)/(bd) \),
\( \mu := (ce)/(df) \),
and
\( \nu := (ae)/(bf) \),
we
let \(\C\) be the genus~2 curve over \(\FF_q\) defined by
\[
    \C : y^2=x(x-1)(x-\lambda)(x-\mu)(x-\nu)
    \ .
\] 
The Jacobian $\Jac{\C}$ of $\C$ has cardinality $\#\Jac{\C}(\FF_q)=2^4N$, 
where 
\[
    N=2^{250} - \mathtt{0x334D69820C75294D2C27FC9F9A154FF47730B4B840C05BD}
\]
is a 250-bit prime.

We define the 127-bit encoding of $\FF_q$ 
to be the little-endian encoding of $\{0,1,\dots, 2^{127}-2\}$,
and let $\mathcal{H} \colon \{0,1\}^* \rightarrow \{0,1\}^{512}$ 
be any suitable hash function with a 512-bit output
(eg.~SHA-512 or SHA3-512). 

To ensure that all our scalars are positive and of fixed bitlength,
when computing scalar multiplications \([m]P\) on \(\Jac{\C}\)
we systematically replace \(m\) with \(m' := (m\bmod{N})+3N\);
then \(m'\) always has exactly 252 bits (since both \(3N\) and \(4N\) have 252
bits).
Following the lead of~\cite{EdDSA}, 
the cofactor 16 is included in the key generation and verification operations 
to void any potential threat of small subgroup attacks~\cite{lim-lee}.
In this case, the scalar $16z$ is parsed by appending four zeroes to
the parsing of $z$ described above, so a multiplication by the 252-bit
scalar $z$ is followed by 4 doubling operations. Where applicable, the operation counts stated below include the cost of these 4 additional ${\tt DBL}$ or ${\tt xDBL}$ operations. 

To compare our \Pattern{} approach to traditional arithmetic in \(\Jac{\C}\), we take the operation counts from~\cite[Table 2]{jac-on-jac}, where {\tt ADD} costs $\FqM{41}+\FqS{7}+\Fqa{22}$, {\tt DBL} costs $\FqM{26}+\FqS{8}+2{\bf m}_c+\Fqa{25}$, {\tt mADD} costs $\FqM{32}+\FqS{5}+\Fqa{22}$ and {\tt mDBLADD} costs $\FqM{57}+\FqS{8}+\Fqa{42}$. Here ${\tt mADD}$ and ${\tt mDBLADD}$ compute $P \oplus Q$ and $[2]P\oplus Q$ where $P$ is projective and $Q$ (which is typically a fixed lookup table element) is normalized. We assume a fixed, signed window approach to computing one-dimensional scalar multiplications in $\Jac{\C}$; our experiments with 252-bit scalars found the window width $w=5$ to be optimal. In this case the scalar multiplications need $8 \times {\tt DBL} + 7 \times {\tt mADD}$ operations to build the lookup table of 16 elements, $\FqI{1}+\FqM{39}$ to normalize its entires, $200 \times {\tt DBL}+50 \times {\tt mDBLADD}$ operations in the main loop, and $\FqI{1}+\FqM{4}$ to normalize the output. For the two-dimensional scalar multiplications, we assume the standard approach to such multiexponentiations (cf.~\cite[\S 3]{GLV}). Our experiments showed that $w=2$ is optimal here, meaning the lookup table contains 16 elements $[u]P+[v]Q$ for $0 \leq u,v \leq 3$. In this case two-dimensional scalar multiplications need $3 \times {\tt DBL} + 10 \times {\tt mADD}$ operations to build the lookup table, $\FqI{1}+\FqM{36}$ to normalize its entries, $125 \times {\tt DBL} + 125 \times {\tt mDBLADD}$ operations in the main loop, and $\FqI{1}+\FqM{4}$ to normalize the output.

We only present counts for the dominant operations, i.e., the (multi)scalar multiplications, below. For the exact costs of compression and decompression, see Appendix~\ref{app:compression}; we omit details of point validation in \(\Jac{\C}\) and \(\Kfast{\C}\), both of which are essentially for free. For simplicity, we compare key generation, signing, and verification operations in~\S\ref{sub:schnorr-recover} assuming that no precomputation is performed offline, noting that, if space permits it, all of these operations can take advantage of precomputation in practice. 

\subsection{Schnorr signatures on the Gaudry-Schost Jacobian}
\label{sub:schnorr-recover}

Let the public generator, $P$, be any point of order $N$ in $\Jac{\C}(\FF_q)$.
Including $P$, 
generic elements $\langle x^2+a_1 x+a_0, b_1x+b_0\rangle$ in $\Jac{\C}(\FF_q)$ 
are compressed as in~\S\ref{app:compression} 
and encoded as a 256-bit string $(  {\tt bit}_0|| a_0 ||  {\tt bit}_1 ||a_1 )$. 

\subsubsection{Key generation:} 
Given the public generator $P$ 
and a 256-bit secret key $d$, 
compute $\mathcal{H}(d)=(d' || d'')$, 
where $d'$ and $d''$ are both $256$-bit strings. 
The public key is computed as the 256-bit encoding of $Q=[16 d']P$. 
Computing $(d',P) \mapsto Q$ directly on the Jacobian costs $8629{\bf M}+2131{\bf S}+424{\bf m}_c+7554{\bf a}+\FqI{2}$, but using Algorithm~\ref{algorithm:Ladder-template} to project, 
pseudomultiply on $\Kfast{\C}$, and recover, 
costs only $\FqM{2654}+\FqS{2312}+\Fqc{1546}+\Fqa{8221}+\FqI{2}$ -- see Theorem~\ref{thm:gaudrycost}. 

\subsubsection{Signing:} 
Given the public generator $P$, 
a message $M$,
and the secret key $\mathcal{H}(d)=(d' || d'')$, 
compute $r = \mathcal{H}(d''||M)$, 
then the 256-bit encoding of $R=[r]P$, 
then $h=\mathcal{H}(R||Q||M)$, 
and finally the 256-bit encoding of $s = (r-16hd') \bmod{N}$. 
The signature is the 512-bit string $(R||s)$. 
The costs of performing $(r,P) \mapsto [r]P$ 
are almost the same as above, 
except for the cost of the four final doublings in both scenarios: 
on $\Jac{\C}$ this comes to $8525{\bf M}+2099{\bf S}+416{\bf m}_c+7454{\bf a}+\FqI{2}$, 
while using Algorithm~\ref{algorithm:Ladder-template} 
costs $2654{\bf M}+2280{\bf S}+1522{\bf m}_c+8157{\bf a}+\FqI{2}$.

\subsubsection{Verification:} 
Given the public generator $P$, 
a message $M$, 
and a putative signature $(R||s)$ on \(M\) associated with a public key $Q$, 
compute $h=\mathcal{H}(R||Q||M)$ 
and accept the signature if $[16s]P+[16 h]Q=[16]R$;
otherwise, reject. 
Computing $((s,h),(P,Q)) \mapsto [16]([s]P+[h]Q)$ 
directly on the Jacobian with the costs $10813{\bf M}+2074{\bf S}+256{\bf m}_c+8670{\bf a}+\FqI{2}$. 
Using Algorithm~\ref{algorithm:DJB-template} 
to perform the 2-dimensional scalar multiplication via $\Kfast{\C}$ 
costs $4478{\bf M}+3325{\bf S}+2308{\bf m}_c+14272{\bf a}+\FqI{2}$ -- see Theorem~\ref{thm:gaudrycost}. \\

We summarize the costs of key generation, signing and verification in Table~\ref{table:sig-operations}. In addition to the speed benefits of Algorithm~\ref{algorithm:Ladder-template} and Algorithm~\ref{algorithm:DJB-template} in this scenario, we reiterate that their working on $\Kfast{\C}$ avoids the side-channel issues discussed in~\S\ref{sec:g2-side-channel}. 

\begin{table}
    \caption{
        Costs of key generation, signing and verification in the above Schnorr signature scheme. 
    }
    \label{table:sig-operations}
    \begin{center}
        \begin{tabular}{c|c|c|c|c|c|c}
            Algorithm    & Method &
           \ {\bf M} \ & \ {\bf S} \ & \ \({\bf m}_c\) \ & \, {\bf a} \, & \, {\bf I} \, \\
		\hline 
		\hline
		\multirow{ 2}{*}{ \ Key Generation \ }		     &	 \ fixed window mult. on \(\Jac{\C}\)	\	&
			8629	&	2131	&		424			&	7554		&	2		\\	
				      &	Algorithm~\ref{algorithm:Ladder-template} 	& 
			2654		&	2312	&		1546			&	8221		&	2		\\	
		\hline
		\hline
		\multirow{ 2}{*}{Signing}		     & fixed window mult. on \(\Jac{\C}\)		&
			8525		&	2099	&		416			&	7454		&	2		\\
				     &	Algorithm~\ref{algorithm:Ladder-template} 	&
			2654		&	2280	&		1522			&	8157		&	2		\\	
		\hline
		\hline
		\multirow{ 2}{*}{Verification}		     & multiscalar mult. on \(\Jac{\C}\)		&
			\ 10813 \		&	\ 2074 \ &		256			&	8670		&	2		\\
				     &	Algorithm~\ref{algorithm:DJB-template}	&
			4478		&	3325	&		\ 2308	\		&\	14272	\	&	\ 2	 \	\\	
            \hline
        \end{tabular}
    \end{center}
\end{table}

\bibliography{bib}
\bibliographystyle{plain}

\appendix

\section{
    Point compression in genus~2
}
\label{app:compression}

Here we show how to compress points on \(\Kgen{\C}\), \(\Kfast{\C}\) and \(\Jac{\C}\). We use ${\bf E}$ to denote a fixed exponentiation in the underlying finite field, e.g., to compute a square root. It is commonly the case that ${\bf E} \approx {\bf I}$. 


\subsubsection{Compressing points on the general Kummer \(\Kgen{\C}\).}
Generic
points $P=(\xi_1 \colon \xi_2 \colon \xi_3 \colon \xi_4)$ in
$\Kgen{\C}(\FF_q) \subset \PP^3(\FF_q)$ have $\xi_1 \neq 0$, 
so we can compute a normalized representative
\((1:k_2:k_3:k_4)\),
where each \(k_i = \xi_i/\xi_1\), at a cost of $\FqM{3}+\FqI{1}$.
We can further compress \(P\) to the data \((k_2,k_3,\mathtt{bit})\),
where \(\texttt{bit}\) is a single bit,
as follows:
the defining equation of \(\Kgen{\C}\) (Eq.~\eqref{eq:Kgen-equation}) 
is quadratic in $\xi_4$, so
\begin{align}\label{xi4}
    k_4
    =
    \frac{
        -K_1(1,k_2,k_3)
        \pm
        \sqrt{K_1(1,k_2,k_3)^2-4K_0(1,k_2,k_3)K_2(1,k_2,k_3)}
    }{
        2K_2(1,k_2,k_3)
    }
    \ ,
\end{align} 
which means that $k_4$ can be recovered (during decompression) 
from $k_2$, $k_3$, and the ``sign'' of the square root.
To compress, after computing \((k_2,k_3,k_4)\),
we compute $K_1=K_1(1,k_2,k_3)$ and $K_2=K_2(1,k_2,k_3)$ 
and set ${\tt bit}$ to be the sign bit\footnote{When $q$ is a large prime, the sign bit of an element is typically chosen as its parity when represented as an integer in $[0,q-1)$.} of $2K_2k_4+K_1$.
Computing the sign bit costs $\FqM{7}+\FqS{2}+\Fqa{11}$.

To decompress $(k_2,k_3,{\tt bit})$ to a point on \(\Kgen{\C}\), 
we first compute 
$K_0=K_0(1,k_2,k_3)$, 
$K_1=K_1(1,k_2,k_3)$
and 
$K_2=K_2(1,k_2,k_3)$. 
We can then use a simultaneous inversion-and-square-root\footnote{E.g., compute $\sqrt{u}$ and $1/v$ via $w \leftarrow uv^2$, $w \leftarrow w^{-1/2}$, then $(\sqrt{u},1/v) \leftarrow (uvw,uw)$.} to compute both $\sqrt{K_1^2-4K_0K_2}$ and $1/(2K_2)$ using one field
exponentiation, before
recovering $k_4$ via~\eqref{xi4}. 
The decompressed point is then \((1:k_2:k_3:k_4)\);
the decompression costs $\FqM{25}+\FqS{4}+\Fqa{24}+\FqE{1}$. 
   
\subsubsection{Compressing points on the fast Kummer \(\Kfast{\C}\).} 
As with the general Kummer \(\Kgen{\C}\),
generic points
$P=(X \colon Y \colon Z \colon T)$ 
in $\Kfast{\C}(\FF_q) \subset \PP^3(\FF_q)$ 
have a non-zero first coordinate \(X\),
so we might begin compressing \(P\)
by normalizing the \(X\)-coordinate to \(1\).
But the defining equation of \(\Kfast{\C}\)
(Eq.~\eqref{eq:Kfast-equation})
is quartic in all of its variables,
which suggests that compressing another coordinate would require solving an (unwieldy) during decompression. 
A faster approach to achieving further compression\footnote{This faster compression/decompression answers a question posed by Bernstein~\cite{dantalk}.} is to map $(X \colon Y \colon Z \colon T)$ to the general Kummer, and then to normalize before performing compression/decompression there. The cost of compressing $P=(X \colon Y \colon Z \colon T)$ into two $\FF_q$ elements and a single bit is therefore $\FqM{25}+\FqS{2}+\Fqa{23}+\FqI{1}$, and the cost of decompression $\FqM{40}+\FqS{4}+\Fqa{36}+\FqE{1}$.
 
%
%
%
   
\subsubsection{Compressing points on the Jacobian.} 
To compress the four Mumford coordinates, we follow Stahlke's
technique~\cite{stahlke}. For general genus~2 curves, compression of
$\langle x^2+a_1x+a_0, b_1x+b_0 \rangle$ into $(a_1,a_0, {\tt bit}_1, {\tt bit}_0)$ costs $3{\bf M}+1{\bf S}+4{\bf a}$, while decompression costs $36{\bf M}+5{\bf S}+45{\bf a}+2{\bf E}$. Compression costs the same for genus~2 curves in Rosenhain form, but decompression is significantly faster and requires $18 {\bf M}+4{\bf S}+27{\bf a}+2{\bf E}$. In the latter case, we compress by setting ${\tt bit}_1$ and ${\tt bit}_0$ as the least significant bits of $4(a_1b_1b_0-a_0b_1^2-b_0^2)$ and $b_1$ respectively. For decompression, we recover $b_1$ and $b_0$ from  $(a_1,a_0,{\tt bit}_1, {\tt bit}_0)$ by first computing
\begin{align}
A&=a_1^2-4a_0, \quad\quad  C= (a_0(a_0-f_3-a_1(a_1-f_4))+f_1)^2, \quad {\rm and} \nonumber\\
B&=2(f_4 a_0 (2 a_0-a_1^2)+a_0(f_3a_1-2f_2)+a_1(f_1+a_0(a_1^2-3a_0))),\nonumber
\end{align}
and using ${\tt bit}_1$ to choose $z_0$ as the correct root $z_0 = \sqrt{B^2-4AC}$. We then set $z_1 = (z_0-B)/(2A)$; here the square root and the inversion of $2A$ can again be combined into one exponentiation. We can then recover $b_1$ as $b_1=\sqrt{f_4(a_1^2-a_0)-a_1(f_3+a_1^2-2a_0)+f_2+z_1}$, using ${\tt bit}_0$ to choose the root. Finally, we recover $b_0$ as $b_0=(a_0(f_4a_1-f_3-q^2+a_0)+qz_1+f_1)/(2b_1)$, noting again that the square root and the inversion can be computed simultaneously. 

\end{document}